\documentclass[11pt]{article}
\usepackage[T1]{fontenc}
\usepackage{lmodern}
\usepackage[letterpaper,margin=1in]{geometry}
\usepackage{amsmath,amssymb,amsthm,mathtools}
\usepackage{booktabs,array}
\usepackage{microtype}
\usepackage{needspace}
\usepackage{xurl}
\usepackage[dvipsnames]{xcolor}
\usepackage{tikz}
\usetikzlibrary{calc,positioning,arrows.meta,fit,decorations.pathreplacing}
\usepackage[hidelinks,unicode]{hyperref}
\hypersetup{pdftitle={An upper bound of 61 for the Ramsey number R(3,3,3,3)},pdfauthor={Milos Tatarevic},pdfsubject={Ramsey theory; computer-assisted proof}}

\newtheorem{theorem}{Theorem}[section]
\newtheorem{lemma}[theorem]{Lemma}
\newtheorem{proposition}[theorem]{Proposition}

\newtheorem{computation}[theorem]{Computation}
\theoremstyle{definition}
\newtheorem{definition}[theorem]{Definition}

\theoremstyle{remark}

\newcommand{\U}{\mathrm U}                 
\newcommand{\T}{\mathrm T}                 
\newcommand{\pA}{\mathrm A}                
\newcommand{\pB}{\mathrm B}
\newcommand{\pC}{\mathrm C}
\newcommand{\pD}{\mathrm D}
\newcommand{\pE}{\mathrm E}
\DeclareMathOperator{\Aut}{Aut}            

\usepackage{enumitem}
\setlist{noitemsep,topsep=2pt}
\title{An upper bound of 61 for the Ramsey number\texorpdfstring{\\$R(3,3,3,3)$}{ R(3,3,3,3)}}
\author{Milos Tatarevic\\[3pt]\normalsize Impossible Research\\\normalsize\texttt{milos.tatarevic@gmail.com}}
\date{}

\begin{document}
\maketitle
\begin{abstract}
We improve the upper bound on the classical Ramsey number $R(3,3,3,3)$ from 62 to 61. The proof is based on a global incidence count, classifications of small neighborhoods and exhaustive SAT computations. We formalize the mathematical reductions in Lean and refute every resulting formula with two SAT solvers.
\end{abstract}
\medskip
\noindent\textit{2020 Mathematics Subject Classification.} 05C55, 05D10.
\section{Introduction}\label{sec:intro}
The Ramsey number $R(k_1,\ldots,k_q)$ is the least integer $n$ such that every $q$-coloring of the edges of $K_n$ contains a monochromatic $K_{k_i}$ in color $i$ for some $i$. We call a coloring \emph{good} if it has no monochromatic triangle. Fettes, Kramer and Radziszowski proved $R(3,3,3,3)\le62$~\cite{FKR}. As of 2026, the dynamic survey of small Ramsey numbers lists $51\le R(3,3,3,3)\le62$~\cite[Section 6.1.1]{Radziszowski}. We improve the upper bound by one.

\begin{theorem}\label{thm:main}
Every four-coloring of the edges of $K_{61}$ contains a monochromatic triangle. Consequently, $R(3,3,3,3)\le61$.
\end{theorem}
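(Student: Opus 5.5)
We argue by contradiction and assume a good four-coloring of $K_{61}$ exists. Fix a vertex $v$. Its $60$ incident edges split into four color classes, and the neighborhood $N_i(v)$ in color $i$ spans no edge of color $i$. So $N_i(v)$ carries a good three-coloring, and therefore $|N_i(v)|\le R(3,3,3)-1=16$. Since $\sum_i |N_i(v)|=60$ with each term at most $16$, the degree sequence of every vertex is one of a short list: $(16,16,16,12)$, $(16,16,15,13)$, $(16,16,14,14)$, $(16,15,15,14)$, $(15,15,15,15)$. The first reduction is therefore structural. We classify, up to the relevant symmetries, the good three-colorings on $16$ vertices (the two Clebsch-type colorings of $K_{16}$ are known), and also those on $15$, $14$, $13$ and $12$ vertices that can occur as a monochromatic neighborhood. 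The $16$-vertex classification is rigid, and we expect the $15$- and $14$-vertex classes to be enumerable by computer in a way that can be formalized.

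The second ingredient is a global incidence count that restricts which degree patterns can coexist. We count monochromatic paths of length two (``cherries'') in two ways. Every triangle of $K_{61}$ is non-monochromatic, so it has exactly two bichromatic angles and one monochromatic angle. Hence
\[
\sum_v\sum_i \binom{d_i(v)}{2}=\binom{61}{3}.
\]
For each color $i$, the color-$i$ graph is triangle-free, which gives a second relation: for every edge $uw$ of color $i$ we have $d_i(u)+d_i(w)\le 61 - (\text{common non-$i$ structure})$, and further edge-level constraints follow from the neighborhood classification. We then compare the left side, evaluated on the degree patterns above, with the right side $\binom{61}{3}=35990$. The aim is to force many vertices to have a neighborhood that is a near-extremal $16$-vertex three-coloring, or else to produce a contradiction. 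With $60$ neighbors the minimum of $\sum_i\binom{d_i}{2}$ is attained at $(15,15,15,15)$, giving $420$ per vertex and $25620$ in total, which is below $35990$. So the raw count alone does not finish, and it has to be sharpened by counting, for each vertex, the bichromatic structure inside its neighborhoods. I expect the right refinement to be a count of triangles of each color type $\{i,i,j\}$, combined with the known bounds on edge counts in good three-colorings of $K_{16}$, $K_{15}$ and $K_{14}$. That should show some vertex has at least two color classes of size $16$, or a similarly constrained local configuration.

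The final step is computational. Given a vertex $v$ whose large color neighborhood is fixed to one of the classified colorings, we encode extending it to a good four-coloring of the remaining vertices as a SAT instance. Each edge gets four Boolean variables with exactly-one constraints, and each triangle gets clauses forbidding it from being monochromatic. We break symmetry by fixing the neighborhood coloring and using its automorphism group, then split into cubes over the coloring of edges between two neighborhood classes. Each instance is refuted by two independent solvers, with DRAT-style proofs checked. The mathematical reductions (the degree enumeration, the incidence inequality, and the case split) would be checked in Lean.

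I expect the main obstacle to be the middle step: finding an incidence inequality strong enough that the surviving local configurations are few and rigid enough for SAT to be feasible. A configuration such as $(15,15,15,15)$ with an arbitrary $15$-vertex good three-coloring is far too large to refute directly. My first attempt would be to prove that some vertex has two colors of degree $16$. Both of those neighborhoods are then Clebsch-type colorings, and the SAT instance becomes tightly constrained by their interaction.
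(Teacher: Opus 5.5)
Your proposal has a genuine gap at the step that carries the proof, the global count. The identity $\sum_v\sum_i\binom{d_i(v)}{2}=\binom{61}{3}$ is false with four colors, because a triangle whose three edges have three different colors has no monochromatic angle. The left side counts only the triangles with exactly two edges of one color, so you only get $\sum_v\sum_i\binom{d_i(v)}{2}\le\binom{61}{3}=35990$. Together with your lower bound $25620$, this constrains nothing: it just says there are at most $10370$ rainbow triangles. The edge inequality $d_i(u)+d_i(w)\le61$ is also vacuous, since all degrees are at most $16$. The sharper count via triangle types and edge counts in good colorings of $K_{14}$, $K_{15}$, $K_{16}$ is only hoped for, not formulated. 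So no reduction to a short list of rigid configurations is actually obtained. Also, $15$-vertex neighborhoods are not arbitrary: every good three-coloring of $K_{15}$ is one of the two $16$-vertex templates minus a vertex.

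Even the target you name would not finish the proof. A vertex with two classes of size $16$ may have profile $(16,16,14,14)$, and this is exactly where the paper reports it could not get a local contradiction. Fix the root's two $16$-vertex groups, and even add the critical neighborhood of a sender inside one of them: the two $14$-vertex groups and many cross edges still stay free. The idea you are missing is the ticket count: $u$ sends $v$ a ticket when $d_{c(uv)}(u)\ge15$, so $\sum_v H(v)=\sum_v w(v)$.

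\begin{itemize}
\item At profiles $(16,16,16,12)$, $(16,16,15,13)$ and $(16,15,15,14)$, SAT proves $H<w$. By pigeonhole, a violating root has several senders inside each $16$-vertex group. One sender's critical neighborhood crosses the root's other three groups in supports of four or five vertices. A template fixes the colors between these supports (600 words per shape), and this makes the formulas tightly constrained.
\item At $(15,15,15,15)$, $H\le w$ is trivial.
\item At $(16,16,14,14)$, $H\le w$ follows once \emph{entrances} (tickets from a $14$-vertex group) are excluded by SAT.
\end{itemize}

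Double counting then forces every vertex to break even. Reciprocity and connectivity inside templates then force every vertex to be $(16,16,14,14)$ with the same two critical colors. Only this global uniformity yields small final formulas, which encode all $32$ opposite-neighborhood embeddings. The paper never classifies good three-colorings on $12$ to $14$ vertices; those groups are left free.
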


\subsection{Proof roadmap}\label{sec:roadmap}
Suppose a good four-coloring of $K_{61}$ exists, and choose a root $r$. Its four color neighborhoods $N_i(r)$, the vertices joined to $r$ in color $i$, partition the other 60 vertices. For simplicity, we call the color neighborhoods of a vertex its \emph{groups}. An edge of color $i$ inside $N_i(r)$ would form a monochromatic triangle with $r$, so each group carries a good three-coloring and has at most 16 vertices, as $R(3,3,3)=17$. Their deficits from 16 add up to 4, which leaves five \emph{profiles}:
\[
\begin{array}{c@{\qquad}c@{\qquad}c}
\pA:(12,16,16,16)&\pB:(13,15,16,16)&\pC:(14,14,16,16),\\[2pt]
\pD:(14,15,15,16)&\pE:(15,15,15,15).&
\end{array}
\]
Up to relabeling and permuting colors, a good three-coloring on 16 vertices is one of two templates, untwisted ($\U$) and twisted ($\T$), and on 15 vertices a template with one vertex removed. A group with 15 or 16 vertices we call \emph{critical}. A template fixes the edges inside its group but not those between groups.

We say that a vertex $u$ \emph{sends a ticket} to $v$ when the neighborhood of $u$ in the color of $uv$ is critical. This neighborhood contains $v$ and crosses the three groups of $v$ not containing $u$, so its template constrains edges between these groups. We denote the numbers of tickets that $v$ receives and sends by $H(v)$ and $w(v)$. Since a vertex sends tickets exactly along its critical colors, vertices of profiles A to E send $48$, $47$, $32$, $46$ and $60$ tickets. Every ticket has one sender and one receiver, so
\[
\sum_v H(v)=\sum_v w(v).
\]

To prove Theorem~\ref{thm:main}, it would suffice to show that every vertex of profile A, B, C or D \emph{falls short}, that is, receives fewer tickets than it sends. An E vertex has only 60 neighbors and sends 60 tickets, so it never receives more than it sends. The differences $w(v)-H(v)$ would then be nonnegative everywhere and positive outside E, while by double counting they sum to zero. Hence, every vertex would have profile E, and every color graph would be 15-regular on 61 vertices, which is impossible because $61\cdot15$ is odd.

In Section~\ref{sec:abd}, we show that vertices of profiles A, B and D fall short, as they receive at most $47$, $46$ and $45$ tickets. A root $r$ that violates its bound receives at least $48$, $47$ or $46$ tickets, but only 44 of its neighbors lie outside a given 16-vertex group. That group therefore contains at least four, three or two senders, and we choose one as the \emph{mark} $p$. Its critical neighborhood in the color of $rp$ contains $r$ and misses the group of $p$. It meets the other three groups of $r$ in \emph{supports}. A support has at most five vertices, since its edges avoid two colors and $R(3,3)=6$. As the supports have 14 or 15 vertices together, each has four or five. The template of the marked neighborhood fixes the colors between them.

We enumerate the placements of the marked neighborhood up to symmetries using the actual neighborhoods. Every placement produces a SAT formula on all 61 vertices, with the edge colors fixed by the templates. SAT solvers refuted all 56,374 formulas. This way, we prove the three ticket bounds. Note that the bounds do not exclude roots of profiles A, B and D. We use them only through double counting.

For profile C, we could not prove the analogous bound $H\le31$, which would finish the proof. If a C vertex receives 32 tickets, they may all come from its two 16-vertex groups. The critical neighborhood of such a sender crosses the other 16-vertex group and the two unclassified 14-vertex groups, and leaves many edges free.

A ticket from a 14-vertex group is different. The critical neighborhood of its sender crosses both 16-vertex groups, and the same construction works with no template on either 14-vertex group. We call such a C root and its mark an \emph{entrance}. In Section~\ref{sec:entrance}, we exclude all entrances with 346 cases.

Without an entrance, a C vertex receives tickets only from its two 16-vertex groups, so $H\le32=w$. With the bounds at A, B, D and E, all differences $w-H$ are nonnegative, and double counting forces every vertex to \emph{break even}, with $H=w$. Thus, profiles A, B and D do not occur. We call a coloring \emph{perfectly balanced} if it has no entrance and every vertex breaks even. In such a coloring, every C vertex receives a ticket from each vertex of its two critical neighborhoods, and every E vertex from all 60 neighbors.

Therefore, if $u$ sends $v$ a ticket, then $v$ sends one back. A degree count shows that the color graphs inside a critical neighborhood are connected, so all its vertices have the same critical colors. This rules out E, since an E vertex would have a 15-vertex neighborhood inside the two 16-vertex groups of a C vertex, with at most five in each. A second count shows that critical neighbors have the same critical pair. Any two vertices share a critical neighbor, as $32+32>61$, so all have the same two critical colors (Section~\ref{sec:allC}).

We name these two colors 0 and 1, and at a root $r$ put $X=N_0(r)$ and $Y=N_1(r)$. For $x\in X$ and $y\in Y$, the opposite neighborhoods $N_1(x)$ and $N_0(y)$ have 16 vertices, so the part of each visible in $X\cup Y$ embeds into $\U$ or $\T$ with all colors preserved. These 32 embedding conditions share the 256 edges between $X$ and $Y$.

For $x\in X$, five visible vertices lie in $X$ and span a two-colored pentagon fixed by the template on $X$, and likewise for $y\in Y$. By enumeration, six tables with the pentagon at fixed positions cover all its placements in $\U$ and $\T$. Each embedding then reduces to placing the other visible vertices in one of these tables. We write the 32 conditions into one formula on all 61 vertices, which also fixes the root partition and the templates on $X$ and $Y$. Up to exchanging colors 0 and 1, three such formulas remain, for UU, UT and TT. All three are unsatisfiable, so no coloring is perfectly balanced (Sections~\ref{sec:guard} to~\ref{sec:overlap}).

In Section~\ref{sec:verification}, we describe the computations and the formal verification, and Figure~\ref{fig:proofmap} shows the proof dependencies.

\begin{figure}[htbp]
\centering
\begingroup\hyphenpenalty=10000\exhyphenpenalty=10000
\def\gr#1{{\scriptsize\color{black!60}#1}}
\begin{tikzpicture}[
  box/.style={draw=black!55,line width=.5pt,rounded corners=3pt,font=\footnotesize,
    align=center,inner xsep=4pt,inner ysep=3pt,minimum height=1.05cm},
  tag/.style={font=\sffamily\fontsize{6.5}{7}\selectfont,inner xsep=2pt,inner ysep=1pt,rounded corners=1.5pt,line width=.4pt},
  sat/.style={tag,draw=blue!55!black,fill=blue!12},
  fin/.style={tag,draw=orange!70!black,fill=orange!25},
  arr/.style={-{Stealth[length=5pt]},black!65,line width=.6pt},
  ln/.style={black!65,line width=.6pt,rounded corners=3pt}]
  \node[box,minimum width=3.7cm] (id) at (0,.8)
    {$\sum_v H(v)=\sum_v w(v)$\\\gr{double counting~(\ref{eq:identity}), \S\ref{sec:tickets}}};
  \node[box,minimum width=3.7cm] (abd) at (0,-.8)
    {A, B, D ticket bounds\\\gr{Prop.~\ref{prop:abd}, \S\ref{sec:abd}, 56,374 cases}};
  \node[box,anchor=west] (lem) at ($(id.east|-0,0)+(.85,0)$)
    {entrance or perfect balance\\\gr{Lemma~\ref{lem:entrance}}};
  \node[box,anchor=west,minimum width=3.4cm] (ent) at ($(lem.east|-id)+(.85,0)$)
    {no entrance\\\gr{Prop.~\ref{prop:noentrance}, \S\ref{sec:entrance}, 346 cases}};
  \node[box,anchor=west,minimum width=3.4cm] (bal) at ($(lem.east|-abd)+(.85,0)$)
    {no perfect balance\\\gr{Prop.~\ref{prop:nobalance}, \S\ref{sec:balance}}};
  \node[box,anchor=west,draw=black!85,line width=.8pt] (thm) at ($(ent.east|-0,0)+(.85,0)$)
    {$R(3,3,3,3)\le61$\\\gr{Theorem~\ref{thm:main}}};
  \node[sat,anchor=south east] at ([xshift=-6pt,yshift=-2.5pt]abd.north east) {SAT};
  \node[sat,anchor=south east] at ([xshift=-6pt,yshift=-2.5pt]ent.north east) {SAT};
  \node[sat,anchor=south east] (t1) at ([xshift=-6pt,yshift=-2.5pt]bal.north east) {SAT};
  \node[fin,anchor=south east] at ([xshift=-3pt]t1.south west) {finite check};
  \coordinate (j1) at ($(id.east|-0,0)!.5!(lem.west)$);
  \draw[ln] (id.east) -| (j1);
  \draw[ln] (abd.east) -| (j1);
  \draw[arr] (j1) -- (lem.west);
  \coordinate (j2) at ($(lem.east)!.5!(lem.east-|ent.west)$);
  \draw[ln] (lem.east) -- (j2);
  \draw[arr,rounded corners=3pt] (j2) |- (ent.west);
  \draw[arr,rounded corners=3pt] (j2) |- (bal.west);
  \coordinate (j3) at ($(ent.east|-0,0)!.5!(thm.west)$);
  \draw[ln] (ent.east) -| (j3);
  \draw[ln] (bal.east) -| (j3);
  \draw[arr] (j3) -- (thm.west);
\end{tikzpicture}
\endgroup
\caption{Proof dependencies. Double counting and the A, B, D bounds give Lemma~\ref{lem:entrance}, which leaves two cases, and excluding both proves the theorem. Tags mark the steps that rest on SAT exclusions or on a finite check of template embeddings, and the other steps are deductions.}
\label{fig:proofmap}
\end{figure}

\section{Preliminaries}\label{sec:prelim}

\subsection{Colorings, degrees and templates}\label{sec:templates}
To reach a contradiction, let $c$ be a good four-coloring of $K_{61}$. We denote the color of an edge $uv$ by $c(uv)$. The \emph{color-$i$ neighborhood} of a vertex $v$ is $N_i(v)=\{u\ne v:c(uv)=i\}$, and $d_i(v)=\lvert N_i(v)\rvert$ is its \emph{color-$i$ degree}. We also call the four color neighborhoods of $v$ its \emph{groups}. We use $R(3,3)=6$ and $R(3,3,3)=17$~\cite{GreenwoodGleason}. A good two-coloring of $K_5$ consists of two complementary five-cycles.

An edge of color $i$ inside $N_i(v)$ would form a color-$i$ triangle with $v$, so $N_i(v)$ carries a good three-coloring. Since $R(3,3,3)=17$ and the four degrees add up to 60, we have
\begin{equation}\label{eq:degrees}
12\le d_i(v)\le16,\qquad \sum_{i=0}^{3}d_i(v)=60.
\end{equation}
We call color $i$ \emph{critical} at $v$ if $d_i(v)\ge15$, and $N_i(v)$ is then a \emph{critical neighborhood}.

\begin{theorem}[Kalbfleisch and Stanton~\cite{KalbfleischStanton}, and Heinrich~\cite{Heinrich}]\label{thm:classification}
Up to relabeling the vertices and permuting the three colors, there are exactly two good three-colorings of $K_{16}$, the untwisted template $\U$ and the twisted template $\T$. Up to the same operations, every good three-coloring of $K_{15}$ is obtained from $\U$ or $\T$ by removing a vertex.
\end{theorem}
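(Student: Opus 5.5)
Since this is a cited classification, I would reprove it along the classical lines: first pin down each color class, then classify how three color classes can tile $K_{16}$, and finally reduce the $K_{15}$ statement to the $K_{16}$ one by adding a vertex.

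\emph{Each color class is the Clebsch graph.} Let a good three-coloring of $K_{16}$ be given, and fix a vertex $v$ and a color $i$. The set $N_i(v)$ has no edge of color $i$, so its edges are two-colored with no monochromatic triangle. Hence $d_i(v)\le5$ by $R(3,3)=6$. Since the three degrees of $v$ sum to $15$, each color graph $G_i$ is $5$-regular and triangle-free. Each $N_i(v)$ also spans a good two-coloring of $K_5$, so it consists of two complementary pentagons in the other two colors. I would then show that $G_i$ is strongly regular with parameters $(16,5,0,2)$. The plan is a double count: count paths $v\,x\,u$ of color $i$ from $v$ to its $10$ non-neighbors $u$, and use the pentagon structure on $N_j(v)$ and $N_k(v)$ to show that no non-neighbor has $\ge3$ common $G_i$-neighbors with $v$. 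Combined with the total $5\cdot4=20=10\cdot2$, this forces $\mu=2$ everywhere. Uniqueness of the $\mathrm{srg}(16,5,0,2)$, the Clebsch graph (folded $5$-cube), is standard: the parameters determine the eigenvalues $5,1,-3$, and one reconstructs the graph from a vertex, its $5$ neighbors and its $10$ non-neighbors.

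\emph{Decompositions into three Clebsch graphs.} Fix $G_0$ as the Clebsch graph. Its complement is $10$-regular and must split into two copies of the Clebsch graph. I would enumerate these splittings modulo $\Aut$ of the Clebsch graph, which has order $1920$. The key constraint is that at every vertex $v$, $N_0(v)$ is split into two complementary pentagons, and $N_1(v)$ and $N_2(v)$ must be triangle-free in their own colors. Fixing the pentagon split at one vertex and propagating along the strong-regularity constraints should leave a small search, done by hand or by computer, that ends in exactly two orbits: the cubic-residue coloring over $\mathrm{GF}(16)$, which is $\U$, and its twisted relative $\T$. They are inequivalent because their automorphism groups have different orders.

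\emph{The $15$-vertex case.} In a good three-coloring of $K_{15}$ every degree is at most $5$ and the three degrees sum to $14$. So each vertex has degree pattern $(5,5,4)$ and is deficient in exactly one color. Let $D_i$ be the set of vertices deficient in color $i$. Parity of the color-$i$ degree sum, $5\cdot15-|D_i|$, shows that each $|D_i|$ is odd, and $\sum_i|D_i|=15$. The goal is to show $|D_i|=5$ and that $D_i$ spans no color-$i$ edge. Then adding a new vertex joined in color $i$ to $D_i$ creates no monochromatic triangle, which gives a good coloring of $K_{16}$ that restricts to the given one, and the claim follows from the first part.

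\emph{Main obstacle.} This extension step is where I expect the real difficulty. My first attempt would be to show that $G_i$ on $15$ vertices is still triangle-free with $\mu\le2$, and then count color-$i$ paths of length two to prove that the deficient vertices of color $i$ are pairwise non-adjacent and number exactly five. If the counting does not close, I would fall back on a direct computer enumeration of good three-colorings of $K_{15}$, checking each against the vertex-deleted templates.
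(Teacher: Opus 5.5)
Your outline follows the classical structural route: show each color class is the Clebsch graph, classify decompositions of $K_{16}$ into three Clebsch graphs, then extend 15-vertex colorings. But the load-bearing steps are only announced, and two of them have concrete gaps.

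First, the pentagon structure does not give $\mu\le2$. Take $m\in N_j(v)$. Inside $N_j(v)$ it has exactly two color-$i$ neighbours, and $c(mv)\ne i$. So $d_i(m)=5$ gives
\[
\mu(v,m)=\lvert N_i(m)\cap N_i(v)\rvert=3-\lvert N_i(m)\cap N_k(v)\rvert .
\]
Here $\lvert N_i(m)\cap N_k(v)\rvert\le2$ because $N_k(v)$ carries a color-$i$ pentagon. The local constraints you invoke therefore yield only $1\le\mu\le3$. Your total of 20 then shows that exactly five color-$i$ edges join $N_j(v)$ to $N_k(v)$. So $\mu\equiv2$ is equivalent to these five edges forming a perfect matching. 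The proposal contains no argument excluding a vertex of $N_j(v)$ with no color-$i$ neighbour in $N_k(v)$, and that is where real work is needed. Likewise, the claim that the decomposition search ends in exactly two orbits is the Kalbfleisch--Stanton theorem itself. It is a legitimate finite computation, but it has to be carried out, not predicted.

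The 15-vertex step is the real gap, as you suspect. Your reduction is correct, and in fact forced. A new vertex must be joined to each $a\in D_i$ in color $i$, since any other color would give $a$ a color degree of 6. Once $D_i$ spans no color-$i$ edge, $R(3,3)=6$ gives $\lvert D_i\rvert\le5$, hence $\lvert D_i\rvert=5$. But given the 16-vertex classification, the claim that every $D_i$ spans no color-$i$ edge is equivalent to the whole second sentence of the theorem. Parity only tells you that each $\lvert D_i\rvert$ is odd. You offer no argument for the independence claim, and it is the substance of Heinrich's long case analysis.

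Your computer fallback is essentially what the paper relies on. The paper does not reprove the classification: it cites Kalbfleisch--Stanton and Heinrich, and its Lean development re-checks it through 55 CNF/LRAT certificate pairs. As written, your proposal is a plan rather than a proof of either sentence of the statement.
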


We label the vertices of each template \(0,\ldots,15\), calling them positions. We label \(U^{-}\) and \(T^{-}\) the colorings obtained by deleting position 15 from $\U$ and $\T$. The exact colored adjacency matrices for all templates are supplied in our repository \cite{repository}.

The \emph{automorphisms} of a template are the permutations of its positions that preserve all edge colors. They form the groups $\Aut(\U)$ and $\Aut(\T)$.

\begin{lemma}\label{lem:templatesym}
\begin{enumerate}
\item[(a)] The groups $\Aut(\U)$ and $\Aut(\T)$ act transitively on the 16 positions.
\item[(b)] $\lvert\Aut(\U)\rvert=160$ and $\lvert\Aut(\T)\rvert=32$, so the stabilizer of a position has order 10 in $\Aut(\U)$ and order 2 in $\Aut(\T)$. The automorphisms of $\U^-$ and of $\T^-$ are exactly the automorphisms of $\U$ and of $\T$ that fix position 15.
\item[(c)] Let $\tau$ be a permutation of the three colors and $z$ a position. There is a permutation of the positions that fixes $z$ and carries every edge of $\U$ of color $j$ to an edge of color $\tau(j)$, for $j=0,1,2$. The same holds for $\T$.
\end{enumerate}
\end{lemma}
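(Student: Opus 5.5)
This lemma is a statement about two explicit colored graphs on 16 vertices, so I would prove it by direct computation from the adjacency matrices of $\U$ and $\T$ in the repository~\cite{repository}, supplemented by structural descriptions where they exist. For $\U$, I would use the Clebsch-type description: positions are $\mathbb{F}_{16}$, and the color of $\{a,b\}$ is the coset of $a-b$ modulo the cubes $\langle g^3\rangle$. Translations $x\mapsto x+t$ and multiplications $x\mapsto g^{3k}x$ then preserve colors, which already gives $16\cdot5=80$ automorphisms. The Frobenius $x\mapsto x^2$ acts on the three cosets as a permutation $\pi$. If $\pi$ is trivial, Frobenius adds another factor of 2, giving order 160; if not, the composite maps give the color permutations needed for (c). I would settle which case holds by computing cosets, or simply confirm the count of 160 by machine. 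For $\T$, I have no comparably clean algebraic model, so I would rely on a computer search.

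For (a) and (b), the plan is to compute $\Aut(\U)$ and $\Aut(\T)$ as the automorphism groups of edge-colored graphs. One way is to encode each template as a vertex-colored auxiliary graph, with a gadget vertex per edge labeled by its color, and run nauty; another is a direct backtracking search over position permutations. The output gives orders $160$ and $32$ and orbits, and transitivity is read off directly. Orbit–stabilizer then yields stabilizer orders $160/16=10$ and $32/16=2$. For the claim about $\U^-$ and $\T^-$, one inclusion is immediate: an automorphism of $\U$ fixing $15$ restricts to an automorphism of $\U^-$. For the converse, I need that every automorphism $\sigma$ of $\U^-$ extends to $\U$ by fixing 15. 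The reason is that position 15 is determined by its colored adjacencies: the color-$j$ neighborhood $N_j(15)$ inside $\U^-$ is a 5-set, and $\sigma$ must permute these three sets compatibly. This is where I expect the argument to become delicate, because the three sets $N_j(15)$ need not be recognizable intrinsically inside $\U^-$. The safest route is to compute $|\Aut(\U^-)|$ directly and check that it equals $10$ (and $|\Aut(\T^-)|=2$); since the restriction map is injective into this group, equality of orders then gives the claim.

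For (c), it suffices to treat the two generators of $S_3$, namely a transposition and a 3-cycle, for one fixed position $z$. A composition of two color-twisting permutations that fix $z$ is again color-twisting and fixes $z$, and by transitivity (a) we may conjugate by an automorphism to move the fixed position anywhere. So I would search for a color-permuting isomorphism $\U\to\U^{\tau}$ fixing position $0$, for each $\tau$ in the generating set, and likewise for $\T$. Equivalently, I would compute the group of \emph{color-permuting} automorphisms and verify that it maps onto $S_3$. Combined with transitivity of the color-preserving part, it then follows that the stabilizer of $z$ in this larger group also surjects onto $S_3$. That last deduction is elementary: given $\tau$, take any map realizing $\tau$ and compose it with a color-preserving automorphism sending the image of $z$ back to $z$.

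The main obstacle is therefore not conceptual. It is making the finite checks trustworthy, especially the exact orders of the automorphism groups and the existence of twisting maps for $\T$, which lacks the field structure. I would produce explicit generating permutations that a reader or Lean can verify mechanically, and certify the group orders by independent tools.
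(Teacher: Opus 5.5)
Your proposal is correct and is essentially the paper's proof. A backtracking (or nauty) search over the templates gives the automorphism groups, their orbits and the $\tau$-realizing maps, and (c) follows by composing such a map with an automorphism that returns the image of $z$ to $z$.

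Two small remarks. The extension from $\U^-$ to $\U$ is not delicate: in $\U^-$, the set $N_j(15)$ is exactly the set of positions of color-$j$ degree 4, so every automorphism of $\U^-$ extends by fixing 15. In your $\mathbb{F}_{16}$ aside, the Frobenius has order 4 and swaps two cosets, so it is $x\mapsto x^4$ that supplies the extra factor of 2.
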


\begin{proof}
We find the automorphisms of the four templates by backtracking. A position is mapped only to an unused position with the same color degrees, and only if its edge colors to the positions mapped so far agree. The resulting lists give (a) and (b). The same search, matching color $j$ to color $\tau(j)$, finds a permutation realizing each $\tau$~\cite{repository}.

For (c), if $\sigma$ realizes $\tau$ (maps color-$j$ edges to color-$\tau(j)$ edges), so does $\alpha\circ\sigma$ for every automorphism $\alpha$, and by (a) some $\alpha$ has $\alpha(\sigma(z))=z$.
\end{proof}

When we place a template on $N_i(v)$, its colors $0,1,2$ stand for the other three colors in increasing order. Theorem~\ref{thm:classification} and Lemma~\ref{lem:templatesym}(c) let us fix this \emph{palette} by relabeling the template vertices. When $d_i(v)=15$, we take position 15 as the missing one and realize the palette by a permutation fixing it.

Thus, every critical $N_i(v)$ is a copy of $\U$, $\T$, $\U^-$ or $\T^-$ with this palette, and its \emph{type} is $\U$ or $\T$. The types stay distinct after deletion, since $\U^-$ and $\T^-$ have 10 and 2 automorphisms by Lemma~\ref{lem:templatesym}(b). On a 16-vertex neighborhood, transitivity lets us put any vertex at position 15.

\subsection{Profiles and tickets}\label{sec:tickets}
The deficits $16-d_i(v)$ are nonnegative and sum to 4, so the five partitions of 4 give the sorted degree profiles in Table~\ref{tab:profiles}.

\begin{table}[htbp]
\centering
\caption{Root profiles and the number of tickets sent.}\label{tab:profiles}
\begin{tabular}{@{}cccc@{}}
\toprule
Profile & Sorted color degrees & Critical neighborhoods & $w(v)$\\
\midrule
A & $(12,16,16,16)$ & 3 & 48\\
B & $(13,15,16,16)$ & 3 & 47\\
C & $(14,14,16,16)$ & 2 & 32\\
D & $(14,15,15,16)$ & 3 & 46\\
E & $(15,15,15,15)$ & 4 & 60\\
\bottomrule
\end{tabular}
\end{table}

\begin{definition}\label{def:ticket}
A vertex $u$ \emph{sends a ticket} to $v\ne u$ when $d_{c(uv)}(u)\ge15$. We denote the numbers of tickets that $v$ receives and sends by $H(v)$ and $w(v)$. Thus
\[
H(v)=\bigl|\{u\ne v:d_{c(uv)}(u)\ge15\}\bigr|,
\qquad
w(v)=\sum_{i:d_i(v)\ge15}d_i(v).
\]
A vertex \emph{breaks even} if $H(v)=w(v)$ and \emph{falls short} if $H(v)<w(v)$.
\end{definition}

Double counting the tickets, once at the sender and once at the receiver, gives
\begin{equation}\label{eq:identity}
\sum_v H(v)=\sum_v w(v).
\end{equation}

\subsection{The two branches}\label{sec:global}
In Section~\ref{sec:abd}, we prove the following bounds.
\begin{proposition}[The A, B and D bounds]\label{prop:abd}
In a good four-coloring of $K_{61}$, a vertex of profile A, B or D receives at most $47$, $46$ or $45$ tickets, respectively.
\end{proposition}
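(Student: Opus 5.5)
We argue by contradiction, following the roadmap. Let $r$ be a root of profile A, B or D, and suppose it receives at least $48$, $47$ or $46$ tickets, respectively. Each of these profiles has a 16-vertex group; fix one, say $N_i(r)$ with $d_i(r)=16$. Only $60-16=44$ neighbours of $r$ lie outside it. Hence at least $4$, $3$ or $2$ senders $p$ lie inside $N_i(r)$. We choose one as the mark $p$, so that $c(rp)=i$ and $d_i(p)\ge15$.

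\textbf{The marked neighbourhood.} Let $M=N_i(p)$, which contains $r$ and carries a template $\U$, $\T$, $\U^-$ or $\T^-$. Since $c(rp)=i$ and $N_i(r)$ has no internal edge of color $i$, the set $M$ misses $N_i(r)$. It therefore meets the other three groups $N_j(r)$, $j\ne i$, in supports $S_j=M\cap N_j(r)$. Inside $S_j$ no edge has color $i$ (by $p$) or color $j$ (by $r$). So $S_j$ is a good two-coloring and $|S_j|\le5$, using $R(3,3)=6$. Since $|M\setminus\{r\}|\in\{14,15\}$, each support has 4 or 5 vertices.

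Inside $M$, the vertex $r$ is joined to $S_j$ in color $j$. Thus the position of $r$ in the template, together with its palette, determines which positions form $S_j$: they are exactly the color-$j$ neighbours of $r$ within the template. By Lemma~\ref{lem:templatesym}(a), for a 16-vertex $M$ we may put $r$ at a fixed position up to automorphism. For a 15-vertex $M$ we range over the positions of $r$ modulo $\Aut(\U^-)$ or $\Aut(\T^-)$. The template then fixes all colors between $S_j$ and $S_{j'}$. These are edges between two groups of $r$ that the templates on $N_j(r)$ alone do not fix.

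\textbf{Enumeration.} For each profile and each choice of the marked group we enumerate three things: the types and positions of the templates on the critical groups of $r$, the type of $M$ and the position of $r$ in it, and the embedding of each support $S_j$ as a 4- or 5-set of positions in the template on $N_j(r)$, where it must span a pentagon or its 4-vertex subgraph with matching colors. We reduce everything modulo the automorphism groups from Lemma~\ref{lem:templatesym}(b). The non-critical group of size 12, 13 or 14 is left unfixed except for the no-color-$j$ condition.

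Each surviving placement gives a SAT formula on 61 vertices with the following ingredients:
\begin{itemize}
\item four color variables per edge;
\item no monochromatic triangle;
\item the root partition and the group templates fixed;
\item the edges inside $M$ fixed;
\item the cardinality constraint that $r$ receives at least the forbidden number of tickets, encoded via degree-count auxiliaries on each $u$ in the color $c(ur)$.
\end{itemize}
Refuting all formulas proves the bound.

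\textbf{Main obstacle.} The main difficulty is making the formulas tractable and the enumeration small enough. The ticket-count constraint is a global cardinality condition over degrees of other vertices. I would strengthen it with the pigeonhole consequence that each 16-group must contain many senders, and possibly also mark several senders at once when the excess permits this (four in profile A). Symmetry breaking across the stabilizers is the other delicate step, because the orbit count must be computed correctly for the Lean formalization. I expect the number of cases to be in the tens of thousands, and I expect the hardest instances to arise when $M$ and the group templates are of type $\U$, which has the largest automorphism group and so the fewest forced edges per orbit.
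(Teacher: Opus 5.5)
Your outline matches the paper's proof in structure. The count forces a sender $p$ into a 16-vertex group. Its critical neighbourhood (your $M$, the paper's brace $S=N_3(p)$, with the marked group named $R_3$) contains $r$, misses $R_3$, and meets the other three groups in supports of size 4 or 5. The template on $S$ fixes the colors between supports, and the placements are enumerated up to symmetry and refuted by SAT.

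The real difference is your ticket-count cardinality constraint, which the paper never encodes. There the count is used only combinatorially: because $H(r)>44$, \emph{every} 16-vertex group contains a sender. So one first uses global color permutations to sort the template types of equal-sized groups (for A, $\U\le\T$ on all three 16-vertex groups). Only then is the mark taken in $R_3$, with $p$ at position 15 of its template. This also removes your enumeration over the choice of marked group and over unsorted type patterns.

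Each formula is then just the goodness clauses plus unit facts: the root edges, the templates on the critical groups, the internal coloring of $S$, and its exact membership, $c(pv)=3$ if and only if $v\in S$. You should list these $p$-edges explicitly; ``the edges inside $M$'' does not include them, and they are where the mark enters the formula. All 56,374 such formulas are unsatisfiable, so after normalization $R_3$ contains no sender at all, and a single mark suffices.

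Your variant is sound, since the extra constraint holds in the hypothetical coloring. But it adds a global counter over 60 degree thresholds to every formula and to the proof that the encoding is faithful. The paper's route keeps every case as the same 7,320-variable base plus unit clauses, which makes the uniform generation and the Lean comparison of formulas straightforward.
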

Since each bound is one below the number sent in Table~\ref{tab:profiles}, these vertices fall short. Section~\ref{sec:abd} gives no such bound for profile C, so the proof splits into two branches.

\begin{definition}[Entrance]\label{def:entrance}
An \emph{entrance} is a pair $(r,p)$ such that $r$ has profile C, $p$ belongs to a 14-vertex neighborhood of $r$, and $p$ sends $r$ a ticket. We call $r$ the \emph{root} and $p$ the \emph{mark}.
\end{definition}
\begin{definition}[Perfect balance]\label{def:balance}
A good coloring of $K_{61}$ is \emph{perfectly balanced} if it has no entrance and every vertex breaks even.
\end{definition}

\begin{lemma}[Entrance or perfect balance]\label{lem:entrance}
Assuming the bounds of Proposition~\ref{prop:abd}, exactly one of the following holds:
\begin{enumerate}
\item An entrance exists.
\item The coloring is perfectly balanced. Only profiles C and E occur. Every C vertex receives tickets from exactly its 32 neighbors in critical colors, and every E vertex from all its 60 neighbors.
\end{enumerate}
\end{lemma}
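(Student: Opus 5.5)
The plan is to assume that no entrance exists and derive case 2 by double counting. The word ``exactly'' is immediate from the definitions: perfect balance includes the absence of an entrance, so cases 1 and 2 are mutually exclusive. It therefore suffices to show that if there is no entrance, then the coloring is perfectly balanced and has the stated structure.

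The first step is to bound $H(v)\le w(v)$ at every vertex, by profile.
\begin{itemize}
\item For profiles A, B and D, Proposition~\ref{prop:abd} gives $H(v)\le w(v)-1$ directly, using the values of $w$ from Table~\ref{tab:profiles}.
\item For profile E, $H(v)\le 60=w(v)$ holds trivially, since $v$ has only 60 other vertices.
\item For profile C, let the groups of $v$ be two 16-vertex groups and two 14-vertex groups. A vertex $u$ in a 14-vertex group of $v$ that sends $v$ a ticket would make $(v,u)$ an entrance. Hence every ticket $v$ receives comes from a vertex of its two 16-vertex groups, and $H(v)\le 32=w(v)$.
\end{itemize}

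Next, summing $w(v)-H(v)\ge0$ over all vertices and applying (\ref{eq:identity}) gives total $0$. So every difference vanishes and every vertex breaks even, which together with the absence of an entrance is perfect balance. Since A, B and D vertices satisfy the strict inequality $H<w$, none can exist, and only profiles C and E occur.

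Finally, equality in the two remaining cases gives the last claims. For a C vertex, $H(v)=32$ and all senders lie in the 32 vertices of its two 16-vertex groups, so each of them sends a ticket. For an E vertex, $H(v)=60$ means every neighbor sends one. The critical colors of a C vertex are exactly its two 16-vertex colors, so these 32 neighbors are exactly its neighbors in critical colors.

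I expect no real obstacle here. The only point needing care is the profile-C step: the argument must use precisely the definition of entrance, namely a ticket from a vertex of a 14-vertex group.
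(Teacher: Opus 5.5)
Your proposal is correct and follows the paper's proof: assuming no entrance, you bound $H(v)\le w(v)$ profile by profile (strictly at A, B, D via Proposition~\ref{prop:abd}, by the 60 available senders at E, and by the definition of entrance at C). You then use~\eqref{eq:identity} to force equality everywhere, and read off the remaining claims from equality, with mutual exclusivity coming from the definition of perfect balance.
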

\begin{proof}
Without an entrance, a C vertex receives tickets only from its two 16-vertex neighborhoods, so $H(v)\le32=w(v)$. At an E vertex, $H(v)\le60=w(v)$, and the bounds give $H(v)<w(v)$ at A, B and D. The differences $w(v)-H(v)$ are thus nonnegative, and~\eqref{eq:identity} directly implies that they all vanish. Consequently, the coloring is perfectly balanced, no vertex has profile A, B or D, and every C or E vertex receives all the tickets listed in item~2. By definition, the items cannot both hold.
\end{proof}

In Sections~\ref{sec:entrance} and~\ref{sec:balance}, we exclude the two branches.
\begin{proposition}[No entrance]\label{prop:noentrance}
No good four-coloring of $K_{61}$ has an entrance.
\end{proposition}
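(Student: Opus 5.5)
We argue by contradiction and reduce the statement to a finite family of SAT instances, following the construction sketched in the roadmap. Suppose $(r,p)$ is an entrance. Relabel colors so that $N_0(r)$ and $N_1(r)$ are the 16-vertex groups and $N_2(r)$, $N_3(r)$ the 14-vertex groups, with $p\in N_3(r)$ (the case $p\in N_2(r)$ is symmetric). Since $c(rp)=3$ and $p$ sends $r$ a ticket, $M=N_3(p)$ is critical, so $\lvert M\rvert\in\{15,16\}$. It contains $r$ and no vertex of $N_3(r)$, because an edge of color 3 from $p$ into $N_3(r)$ together with $rp$ and the edge to $r$ would give a color-3 triangle. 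Hence $M\setminus\{r\}$ has 14 or 15 vertices, split into supports $S_j=M\cap N_j(r)$ for $j=0,1,2$.

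Each $S_j$ avoids colors $j$ (inside $N_j(r)$) and $3$ (inside $N_3(p)$). Since $R(3,3)=6$, we get $\lvert S_j\rvert\le5$, and the total of 14 or 15 forces sizes $(5,5,4)$ or $(5,5,5)$ in some order, with each 5-support a pair of complementary pentagons. By Theorem~\ref{thm:classification}, $M$ is a copy of $\U$, $\T$, $\U^-$ or $\T^-$ in the palette $0,1,2$. Its edges among $r$, $S_0$, $S_1$, $S_2$ are therefore fixed, including all the colors between different supports. The groups $N_0(r)$ and $N_1(r)$ are also copies of $\U$ or $\T$ with palettes $\{1,2,3\}$ and $\{0,2,3\}$. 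The 14-vertex groups receive no template, and this is why the case is tractable even though profile C gave no global bound.

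Next we enumerate placements up to symmetry. For each choice of types for $N_0(r)$, $N_1(r)$ and $M$, we pick a position for $r$ in $M$; by Lemma~\ref{lem:templatesym}(a) we may put $r$ at position 15 when $\lvert M\rvert=16$, and when $\lvert M\rvert=15$ we range over its orbits. The removal of $r$ leaves the 14 or 15 remaining positions of $M$, which must split into color classes by their color to $r$: class $j$ goes into $N_j(r)$. This split is determined by the template, so the supports are fixed sets of template positions. We then enumerate injective embeddings of $S_0$ into the template on $N_0(r)$ and of $S_1$ into that on $N_1(r)$, subject to the colors of $M$ restricted to each support (colors $1,2$ on $S_0$, colors $0,2$ on $S_1$). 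These are the pentagon placements, and we quotient by $\Aut$ of the templates on $N_0(r)$ and $N_1(r)$ and by the stabilizers in $\Aut(M)$. Each orbit representative gives a CNF formula on all 61 vertices. The formula has Boolean variables for the edge colors, no-monochromatic-triangle clauses, the root partition with sizes $(16,16,14,14)$, the two templates fixed on $N_0(r)$ and $N_1(r)$, the color 3 on $pr$ and on $p$–$M$ edges, and the template on $M$ fixed. Optionally we add degree bounds \eqref{eq:degrees} as cardinality constraints, and profile-C degrees at $r$. We refute every formula with two SAT solvers and check the enumeration's symmetry reduction independently. The paper reports 346 cases, consistent with this scheme.

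The main obstacle is making the symmetry reduction correct and small enough. In particular, the palette conventions interact with color relabelings: swapping colors 0 and 1, or 2 and 3, changes which template plays which role. Template types must be combined over $\{\U,\T\}^3$ together with the deleted variants. If the raw instances turn out too hard, I would first add symmetry-breaking on the unconstrained 14-vertex groups, for example lexicographic ordering of vertices in $N_2(r)\setminus S_2$ and $N_3(r)\setminus\{p\}$. I would also add the cardinality constraints $d_i(v)\le16$ for every vertex.
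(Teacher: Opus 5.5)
Your proposal follows the paper's proof. With the mark in a 14-vertex group, you place its critical brace $N_3(p)$ across the root's groups with templates only on the two 16-vertex groups and on the brace, enumerate the four support shapes and the placements up to symmetry, and refute the resulting 61-vertex formulas by SAT, as in Proposition~\ref{prop:marked-c-cover} and Computation~\ref{comp:csat}. The paper instead parametrizes placements by orbit representatives of the foreign supports with one of the 600 cross-support words, and adds negative facts $c(pv)\ne3$ that fix the brace exactly; this is equivalent bookkeeping, and leaving out those negative facts only weakens your formulas without affecting soundness.
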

\begin{proposition}[No perfect balance]\label{prop:nobalance}
No good four-coloring of $K_{61}$ is perfectly balanced.
\end{proposition}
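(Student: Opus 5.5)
The plan follows the roadmap in Section~\ref{sec:intro}. We assume a perfectly balanced good coloring and derive a contradiction in two stages. The first stage is a purely combinatorial reduction showing that every vertex has profile C with the same two critical colors. The second stage is a finite embedding computation followed by a SAT refutation.

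\textbf{Stage 1: reciprocity and connectivity.} By Lemma~\ref{lem:entrance}, only profiles C and E occur. A C vertex receives tickets exactly from its 32 neighbors in critical colors, and an E vertex from all 60 neighbors. Hence every ticket is reciprocated: if $u$ sends to $v$, then $v$ sends to $u$.

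Fix a vertex $u$ and a critical color $i$ at $u$. Every $v\in N_i(u)$ receives a ticket from $u$, so color $i$ is critical at $v$. Thus all of $N_i(u)\cup\{u\}$ lies in the color-$i$ subgraph restricted to vertices at which $i$ is critical.

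I would then show that the color-$j$ graphs ($j\ne i$) inside a critical neighborhood $N_i(u)$ are connected. For $\U$ and $\T$, and for the 15-vertex versions, this is a finite check of the templates. A degree count also suffices: each color graph in a template is 5-regular on 16 vertices and triangle-free. Since the edge $vw$ with $c(vw)=j$ carries a ticket in both directions, $j$ critical at $v$ forces $j$ critical at $w$. Propagating along these connected color graphs shows that all vertices of $N_i(u)$ share the same set of critical colors.

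\textbf{Stage 1 continued: excluding E.} Suppose $v$ has profile E, and take $w\in N_i(v)$. Then $w$ has all four colors critical, so $w$ is also E. Connectivity spreads this, so if one E vertex exists, then every vertex reachable through critical neighborhoods is E.

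A C vertex $r$ cannot have an E neighbor. That neighbor would send $r$ a ticket along a 14-vertex color of $r$, but $r$ receives tickets only from its two 16-vertex groups. An E neighbor in a 16-vertex group of $r$ has a 15-vertex critical neighborhood meeting at most five vertices of each group of $r$ it crosses, which is the counting in the roadmap.

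Hence the E vertices form a union of components with no edges to C vertices. Since every pair of vertices is joined by an edge, either all vertices are E or all are C. All vertices being E is impossible because $61\cdot15$ is odd. So every vertex is C.

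Finally, any two vertices $u,v$ have $32+32>61$ critical neighbors in total, so they share a common critical neighbor. Together with the fact that critical neighbors share their critical pair, this gives one global critical pair, say colors $0,1$.

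\textbf{Stage 2: computation.} Fix a root $r$ with $X=N_0(r)$ and $Y=N_1(r)$, each carrying $\U$ or $\T$. For each $x\in X$, the set $N_1(x)$ has 16 vertices and carries a template. Its trace on $X\cup Y$ consists of the five vertices $N_1(x)\cap X$, which form a two-colored pentagon fixed by the template on $X$, together with some vertices of $Y$. This trace must embed color-preservingly into $\U$ or $\T$. The same holds symmetrically for each $y\in Y$.

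I would enumerate, up to template automorphisms (Lemma~\ref{lem:templatesym}), the placements of such a pentagon in $\U$ and $\T$, aiming for a small table of canonical positions. Each embedding condition then becomes a disjunction over the tables, expressed as clauses on the 256 cross edges and the remaining edges of all 61 vertices. Up to swapping colors 0 and 1, the type pairs UU, UT and TT give three formulas, and I would refute each with SAT solvers.

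The main obstacle is Stage 2: encoding the 32 embedding conditions compactly enough that the formulas are tractable. The key step is the reduction to the pentagon placement tables. If those formulas prove too hard, I would first try symmetry breaking using $\Aut$ of the root templates.
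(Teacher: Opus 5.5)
\textbf{Stage 1 has a gap: you never tie a vertex's critical set to that of its critical neighbors.} Stage 2 is essentially the paper's route. The paper additionally encodes the guard, which chooses $N_0(x)\cap Y$ among the 16 color-0-independent five-subsets, and you would want that constraint too.

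Your homogeneity argument only shows that the vertices \emph{inside} one critical neighborhood $N_i(v)$ agree with one another. Since $v\notin N_i(v)$, it says nothing about $v$ versus its neighbors, yet you use exactly that in two places.

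\begin{itemize}
\item In ``take $w\in N_i(v)$. Then $w$ has all four colors critical,'' nothing you have proved links $w$ to $v$.
\item In the final step, ``critical neighbors share their critical pair'' must mean that $u$ has the same pair as each of its critical neighbors. If the common critical neighbor $z$ sees $u$ and $v$ in different critical colors, homogeneity of $N_0(z)$ and $N_1(z)$ separately does not connect them.
\item Your case-2 count in ``a C vertex cannot have an E neighbor'' depends on the same missing fact. For $e\in X$ and $j\ne 0$, the set $N_j(e)\cap N_j(r)$ only avoids color $j$, so the bound of five fails there. You must already know that $N_j(e)$ consists of E vertices and hence lies in $X\cup Y$.
\end{itemize}

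\textbf{How the paper avoids this.} It never needs the vertex-to-neighbor transfer.
\begin{itemize}
\item For E: $e\in X$ makes all of $X$ E by homogeneity of $X$ itself. So the five color-2 neighbors of $e$ in $X$ are E, and homogeneity of $N_2(e)$ makes it entirely E. Reciprocity puts $N_2(e)$ inside $X\cup Y$, and $15\le 5+5$ fails.
\item For the pair: if $x\in X$ had critical pair $\{0,j\}$ with $j\in\{2,3\}$, reciprocity forbids color-$j$ edges from $x$ to $N_j(r)$. Hence $d_j(x)\le 5+5+5=15<16$.
\end{itemize}

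\textbf{A direct repair of your route.} Let $k\ne i$ be critical colors at $u$ and $w\in N_k(u)$. Then $N_k(w)$ is critical, contains $u$ and misses $N_k(u)$. It meets each other group of $u$ in at most five vertices, so it meets $N_i(u)$ in at least $14-10=4$. Homogeneity of $N_k(w)$, then of $N_i(u)$, gives all of $N_i(u)$ the critical set of $u$. Every vertex has at least two critical colors, so this covers all its critical neighborhoods. Your E exclusion then follows directly: one E vertex forces all 60 others to be E, contradicting parity. Your common-neighbor argument also goes through.
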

\begin{proof}[Proof of Theorem~\ref{thm:main} from the three propositions]
By Proposition~\ref{prop:abd} and Lemma~\ref{lem:entrance}, a good four-coloring of $K_{61}$ has an entrance or is perfectly balanced. Propositions~\ref{prop:noentrance} and~\ref{prop:nobalance} exclude both.
\end{proof}

\subsection{Symmetries, formulas and coverage}\label{sec:formulas}
A \emph{symmetry} $(\sigma,\tau)$ consists of a vertex permutation $\sigma$ and a global color permutation $\tau$. It sends $c$ to $c'$ with $c'(\sigma(u)\sigma(v))=\tau(c(uv))$ and preserves goodness, profiles and tickets. We extend permutations of vertex subsets by the identity.

\begin{definition}\label{def:case}
A \emph{marked case} $\kappa$ is a finite list of signed edge facts on the vertices $0,\ldots,60$, each stating that an edge $uv$ has, or does not have, a given color $i$. Its formula $\Phi_\kappa$ uses variables $x_{e,i}$ for the $1{,}830$ edges $e$ and four colors $i$. Besides the unit clauses for those facts, it contains
\begin{equation}\label{eq:baseclauses}
\begin{aligned}
&\bigvee_{i=0}^3 x_{e,i} &&\text{for each edge }e,\\
&\neg x_{e,i}\vee\neg x_{e,j} &&\text{for each edge }e\text{ and }i<j,\\
&\neg x_{uv,i}\vee\neg x_{uw,i}\vee\neg x_{vw,i}
&&\text{for each triangle }uvw\text{ and color }i.
\end{aligned}
\end{equation}
There are $7{,}320$ variables and $156{,}770$ shared clauses. Only the unit clauses depend on $\kappa$.
\end{definition}

\begin{lemma}[Encoding and coverage]\label{lem:soundness}\label{cor:cover}
The satisfying assignments of $\Phi_\kappa$ correspond exactly to good four-colorings satisfying the facts of $\kappa$. A list of cases \emph{covers} a class of good colorings if every coloring in that class satisfies one of its cases after some symmetry. If every formula in a cover is unsatisfiable, no coloring in the covered class exists.
\end{lemma}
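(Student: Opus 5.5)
The plan is to check the two parts of Lemma~\ref{lem:soundness} separately: first the correspondence between satisfying assignments and colorings, and then the coverage consequence.

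\emph{Correspondence.} Let an assignment satisfy $\Phi_\kappa$. Fix an edge $e$.
\begin{itemize}
\item The clause $\bigvee_i x_{e,i}$ makes at least one $x_{e,i}$ true.
\item The pairwise clauses $\neg x_{e,i}\vee\neg x_{e,j}$ make at most one true.
\end{itemize}
So exactly one variable $x_{e,i}$ is true for each edge, and we define $c(e)=i$ for that $i$. This gives a four-coloring of $K_{61}$. For a triangle $uvw$ and a color $i$, the triangle clause forbids all three edges having color $i$, so $c$ is good. The unit clauses of $\kappa$ say exactly that each fact holds for $c$: a fact ``$uv$ has color $i$'' is the unit $x_{uv,i}$, and ``$uv$ does not have color $i$'' is $\neg x_{uv,i}$.

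Conversely, take a good coloring $c$ satisfying the facts of $\kappa$. Set $x_{e,i}$ true iff $c(e)=i$. Then each clause is satisfied for the same reasons read backwards. The two maps are mutually inverse, because both are determined by the same rule $x_{e,i}\Leftrightarrow c(e)=i$. This gives a bijection.

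\emph{Coverage.} Suppose every formula in a cover is unsatisfiable, and suppose some good coloring $c$ lies in the covered class. By the definition of a cover, there is a symmetry $(\sigma,\tau)$ with image $c'$ that satisfies some case $\kappa$ in the list. We need $c'$ to be good. This holds because a monochromatic triangle $\sigma(u)\sigma(v)\sigma(w)$ of color $\tau(i)$ in $c'$ pulls back to a color-$i$ triangle $uvw$ in $c$. By the correspondence, $c'$ then yields a satisfying assignment of $\Phi_\kappa$, which contradicts unsatisfiability. Hence no coloring in the class exists.

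The only point needing care is this goodness-preservation under symmetries. The rest is direct unwinding of the clause definitions, so I expect no real obstacle.
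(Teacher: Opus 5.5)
Your proposal is correct and follows the paper's proof: the exactly-one clauses define the coloring, the triangle clauses give goodness, the unit clauses encode the facts, and coverage follows by applying the symmetry and then the correspondence. The only difference is that you verify goodness-preservation under a symmetry explicitly, which the paper records once when it defines symmetries in Section~\ref{sec:formulas}.
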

\begin{proof}
The first two clause families give each edge exactly one color, the third forbids monochromatic triangles, and the unit clauses impose exactly the listed facts. A coloring in the class would, after a symmetry, satisfy one of the cases and hence its formula.
\end{proof}

\section{Marked neighborhoods and the A, B and D bounds}\label{sec:abd}

Here and in Section~\ref{sec:entrance}, we place a sender's critical neighborhood across the receiver's groups.

\subsection{A common construction}\label{sec:marked-construction}
Let $r$ be a root with groups $R_i=N_i(r)$ such that the \emph{foreign groups} $R_1$ and $R_2$ carry placed critical templates. Suppose $p\in R_3$ sends $r$ a ticket, that is, $d_3(p)\in\{15,16\}$. We call $p$ the \emph{mark}, $S=N_3(p)$ the \emph{brace} and $S_i=S\cap R_i$, for $i=0,1,2$, the \emph{supports} (Figure~\ref{fig:marked-supports}). We assume no template on $R_0$ and call it \emph{free}. For fixed sizes of the root's groups, all cases use the same vertices for the root, the mark and the four groups.

\begin{lemma}\label{lem:marked-shapes}
The brace is $\{r\}\cup S_0\cup S_1\cup S_2$. Its ordered support sizes $(\lvert S_0\rvert,\lvert S_1\rvert,\lvert S_2\rvert)$ have one of the four shapes
\[
(4,5,5),\qquad(5,4,5),\qquad(5,5,4),\qquad(5,5,5).
\]
A five-support carries complementary five-cycles, and a four-support carries complementary paths.
\end{lemma}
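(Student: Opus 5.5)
We will prove Lemma~\ref{lem:marked-shapes} by locating the brace inside the root's groups and then bounding each support. The brace $S=N_3(p)$ does not contain $p$. It contains $r$, because $r\in N_3(p)$ is equivalent to $p\in R_3=N_3(r)$. Next we check that $S$ has no other vertex of $R_3$. If $u\in R_3\cap S$ with $u\ne r$, then $ru$, $rp$ and $pu$ all have color $3$, which is a color-3 triangle. Hence $S\setminus\{r\}\subseteq R_0\cup R_1\cup R_2$. This gives $S=\{r\}\cup S_0\cup S_1\cup S_2$, and $\lvert S_0\rvert+\lvert S_1\rvert+\lvert S_2\rvert=d_3(p)-1\in\{14,15\}$.

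Now we bound a single support $S_i$. Every edge inside $S_i$ avoids color $i$, since both endpoints lie in $R_i=N_i(r)$ and a color-$i$ edge there would close a triangle with $r$. Every edge inside $S_i$ also avoids color $3$, since both endpoints lie in $N_3(p)$. So $S_i$ carries a good two-coloring in the two colors of $\{0,1,2\}\setminus\{i\}$. Because $R(3,3)=6$, this gives $\lvert S_i\rvert\le5$.

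The shapes follow from a short count. Three numbers, each at most $5$, that sum to $14$ or $15$ must be $(5,5,5)$ or a single $4$ with two $5$'s. That yields exactly the four ordered shapes listed in the statement.

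For the structure, a five-support carries a good two-coloring of $K_5$. As recalled in Section~\ref{sec:templates}, such a coloring is two complementary five-cycles. A four-support $S_i$ carries a good two-coloring of $K_4$. We still need to show that both color classes are Hamiltonian paths $P_4$, which are complementary in $K_4$.

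The main check is this four-vertex case, and we handle it by degrees. Each vertex has degree at most $2$ in each color, since three edges of one color at a vertex would force a monochromatic triangle among their ends (by $R(3,3)$ applied locally, or directly). Every vertex has total degree $3$ in $K_4$, so its color degrees are $(1,2)$ or $(2,1)$. Consider one color class. It is triangle-free, has maximum degree $2$ and minimum degree at least $1$ on four vertices, and it has exactly $3$ edges. It cannot be $C_4$, because the complement of $C_4$ is a perfect matching and the other color class would have degree-1 vertices with three edges. So it is not a cycle, and a forest with degrees in $\{1,2\}$ and three edges on four vertices is a path. The same argument makes the other class a path, and the two paths are complementary $P_4$'s. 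This finishes the plan.
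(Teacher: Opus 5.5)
Your argument is fine up to and including the five-support case. The four-support step, however, has a genuine gap: the claim you set out to prove there is false for good two-colorings of $K_4$ taken in isolation. Color the $4$-cycle $1,2,3,4,1$ in one color and the diagonals $13,24$ in the other. Neither class contains a triangle, and every vertex has color degrees $(2,1)$, so this coloring passes all your degree checks.

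Your assertion that each color class has exactly three edges is never justified. With every degree in $\{1,2\}$ on four vertices, the degree sum only gives $2$, $3$ or $4$ edges. Your exclusion of $C_4$ then relies on this same unproved count for the other class, so the argument is circular. In fact the good two-colorings of $K_4$ are exactly the types $\{P_4,P_4\}$ and $\{C_4,2K_2\}$. Hence no argument that uses only the internal coloring of $S_i$ can establish the claim.

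The missing ingredient is global information about the brace. A four-support occurs only when $\lvert S\rvert=15$. Then $S$ carries a good three-coloring of $K_{15}$, so by Theorem~\ref{thm:classification} it is $\U$ or $\T$ with one position $z\ne r$ deleted. In the 16-vertex template, every color degree of $r$ is five, since each is at most five and they sum to $15$. So each color neighborhood of $r$ there is a five-set carrying complementary pentagons. Since $\lvert S_i\rvert=4$, the deleted position $z$ must lie in the color-$i$ neighborhood of $r$, and $S_i$ is that five-set minus $z$. Deleting a vertex from complementary five-cycles leaves complementary paths.

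This is the route the paper takes. Your direct appeal to the structure of good two-colorings of $K_5$ for the five-supports is correct and does not need the classification.
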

\begin{proof}
The root lies in $S$, and no vertex $v\ne p$ of $R_3$ does, since $rpv$ would be a color-3 triangle. Edges inside $S_i$ avoid colors $i$ and 3, so $|S_i|\le5$ by $R(3,3)=6$. As the sizes add up to $|S|-1\in\{14,15\}$, only the four shapes remain. By classification, $S$ is a copy of $\U$, $\T$, $\U^-$ or $\T^-$. In a good three-coloring of $K_{16}$, the three color degrees are at most five by $R(3,3)=6$ and add up to 15, so each is five. Hence, the color neighborhoods of $r$ in $\U$ or $\T$ carry good two-colorings of $K_5$, that is, complementary five-cycles. The supports are these sets, and deleting a position shrinks at most one of them to four vertices, which carry complementary paths.
\end{proof}
\begin{figure}[htbp]
\centering
\definecolor{braceblue}{HTML}{0072B2}
\definecolor{braceorange}{HTML}{E69F00}
\definecolor{bracegreen}{HTML}{009E73}
\definecolor{bracepurple}{HTML}{CC79A7}
\begin{tikzpicture}[
  c0/.style={draw=braceblue,line width=.9pt},
  c1/.style={draw=braceorange,line width=.9pt,dashed},
  c2/.style={draw=bracegreen,line width=1.1pt,densely dotted},
  c3/.style={draw=bracepurple,line width=.9pt,dash dot},
  vtx/.style={circle,fill=black,inner sep=0pt,minimum size=3.6pt},
  nbhd/.style={draw=black!55,rounded corners=7pt,line width=.7pt},
  part/.style={draw=black!40,fill=black!6,rounded corners=4pt,line width=.5pt},
  lbl/.style={font=\small},
  slbl/.style={font=\small}]
  \path[fill=bracepurple!10,draw=bracepurple!85,dashed,line width=.6pt,rounded corners=14pt]
    (-2.85,2.5) -- (2.85,2.5) -- (2.85,-2.5) -- (1.35,-2.5) -- (-.6,-.5) -- (-2.85,1.0) -- cycle;
  \node[slbl,text=bracepurple!70!black] at (0,2.9) {brace $S=N_3(p)$};
  \draw[nbhd] (-6.0,1.0) rectangle (-1.0,2.6);
  \draw[nbhd] (1.0,1.0) rectangle (6.0,2.6);
  \draw[nbhd] (-5.2,-2.6) rectangle (-1.2,-1.0);
  \draw[nbhd] (1.2,-2.6) rectangle (5.2,-1.0);
  \node[lbl,above] at (-3.9,2.6) {$R_1=N_1(r)$: 16 vertices};
  \node[lbl,above] at (3.9,2.6) {$R_2=N_2(r)$: 16 vertices};
  \node[lbl,below] at (-3.2,-2.6) {$R_3=N_3(r)$: 14 vertices};
  \node[lbl,below] at (3.2,-2.6) {$R_0=N_0(r)$: 14 vertices};
  \foreach \k in {0,...,5} {
    \node[vtx] at ({-5.6+.5*\k},2.15) {};
    \node[vtx] at ({5.6-.5*\k},2.15) {};
  }
  \foreach \k in {0,...,4} {
    \node[vtx] at ({-5.35+.5*\k},1.45) {};
    \node[vtx] at ({5.35-.5*\k},1.45) {};
  }
  \foreach \k in {0,...,6} \node[vtx] at ({-4.9+.38*\k},-1.4) {};
  \foreach \k in {0,...,5} \node[vtx] at ({-4.71+.38*\k},-2.2) {};
  \foreach \k in {0,...,4} \node[vtx] at ({3.05+.45*\k},-1.4) {};
  \foreach \k in {0,...,3} \node[vtx] at ({3.275+.45*\k},-2.2) {};
  \foreach \nm/\cx/\cy in {A/-2.0/1.8,B/2.0/1.8,C/2.1/-1.8} {
    \draw[part] ({\cx-.6},{\cy-.6}) rectangle ({\cx+.6},{\cy+.6});
    \foreach \k in {0,...,4} \node[vtx] (\nm\k) at ($(\cx,\cy)+(90+72*\k:.38)$) {};
  }
  \node[slbl,anchor=east] at (-2.62,1.8) {$S_1$};
  \node[slbl,anchor=west] at (2.62,1.8) {$S_2$};
  \node[slbl,anchor=west] at (2.72,-1.8) {$S_0$};
  \draw[black!40,line width=.5pt] (A0) -- (B0);
  \draw[black!40,line width=.5pt] (A4) -- (B1);
  \draw[black!40,line width=.5pt] (A3) -- (B2);
  \node[vtx,minimum size=5pt,label={[lbl]right:$r$}] (r) at (0,0) {};
  \draw[c1] (r) -- (A2);
  \draw[c2] (r) -- (B3);
  \draw[c0] (r) -- (C1);
  \node[circle,draw=black,fill=white,inner sep=0pt,minimum size=6.5pt,line width=.9pt] (p) at (-1.8,-1.5) {};
  \node[lbl,below=1pt of p] {$p$};
  \draw[c3,-{Stealth[length=7pt]}] (p) -- (r);
  \draw[c3] (p) -- (A1);
  \draw[c3] (p) -- (C2);
  \draw[c3] (p) to[bend right=22] (B3);
\end{tikzpicture}
\caption{The construction for a C root and shape $(5,5,5)$. The white mark $p\in R_3$ sends $r$ a ticket along a color-3 edge (arrow). Its shaded brace $S=N_3(p)$ contains $r$, avoids $R_3$ and meets the other groups in the supports. Here only $S$, $R_1$ and $R_2$ carry templates. We draw only some edges. The three gray edges between supports lie in the brace and are drawn without colors.}
\label{fig:marked-supports}
\end{figure}

The \emph{reference coloring} of support $i$ uses the two colors of $\{0,1,2\}\setminus\{i\}$. It puts the smaller one on the path $0,1,2,3$ if $|S_i|=4$, or on the cycle $0,1,2,3,4,0$ if $|S_i|=5$, and the larger on the other pairs. A \emph{labeling} of $S_i$, or of any set colored like it, is a color-preserving bijection from the reference vertices onto it.

\begin{definition}\label{def:marked-word}
Let $a=(a_0,a_1,a_2)$ be a shape and $n=1+a_0+a_1+a_2$. Let $\rho\in\{\U,\T\}$ if $n=16$, and $\rho\in\{\U^-,\T^-\}$ if $n=15$. A \emph{base vertex} of $\rho$, which stands for the root, is position 15 if $n=16$, and any position with $a_i$ neighbors of color $i$, for $i=0,1,2$, if $n=15$. For any labelings of the color-$i$ neighbors of a base vertex by the reference coloring of support $i$, the colors between the three sets, read in label order, form a \emph{word} of shape $a$. Its length is 65, or 75 for shape $(5,5,5)$.
\end{definition}

\begin{computation}\label{comp:marked-words}\label{comp:words}
For each shape, these base vertices and labelings give exactly 600 distinct words, 100 from $\U$ or $\U^-$ and 500 from $\T$ or $\T^-$.
\end{computation}

Every brace gives one of these words, for any labelings of its supports. By classification, we place a template on $S$ with $r$ at a base vertex, using transitivity when $|S|=16$, and compose the support labelings with this placement.

We normalize the supports by symmetry. For $i=1,2$, we list the subsets of the template on $R_i$ that admit the reference coloring of $S_i$. From each orbit under the automorphisms of the whole template, we keep one, with a fixed labeling. We place the free support $S_0$ on fixed vertices of $R_0$, in reference order.

For fixed sizes of the root's groups and fixed template types, a \emph{normalized configuration} is a selected pair of foreign supports with one of the 600 words of a matching shape. The automorphism group of the free reference path or cycle has order 2 or 10. With the set stabilizers of the foreign supports, it acts on these words, so we may keep only orbit representatives.

Each normalized configuration fixes:
\begin{enumerate}
\item all root edges;
\item the templates on $R_1$ and $R_2$, and on $R_3$ when present;
\item the internal coloring of the brace and its exact membership as $N_3(p)$.
\end{enumerate}
These specifications constrain one coloring, so they must agree on every shared edge.

For a symmetry $(\sigma,\tau)$, the \emph{pullback} of a fact $c'(uv)=i$ is the fact $c(\sigma^{-1}(u)\sigma^{-1}(v))=\tau^{-1}(i)$, and likewise with $\ne$ for a negative fact. By the definition of a symmetry (Section~\ref{sec:formulas}), $c'$ satisfies a fact exactly when $c$ satisfies its pullback.

We call the list of cases left after these reductions a \emph{retained case list}. It satisfies the \emph{finite coverage condition} if each normalized configuration has a retained case and a symmetry that pulls every fact of the case back to a fact of the configuration. The same check justifies the reduction to orbit representatives and any exchange of equal-sized groups.

\begin{lemma}[Marked-neighborhood cover]\label{lem:marked-cover}
For fixed sizes of the root's groups and placed template types, the normalized configurations cover every good coloring with a mark as above, and so does every retained case list that satisfies the finite coverage condition. Neither claim needs a template on $R_3$.
\end{lemma}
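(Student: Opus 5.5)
Let $c$ be a good coloring with root $r$, groups $R_i=N_i(r)$ of the fixed sizes, templates of the fixed types placed on $R_1$ and $R_2$, and a mark $p\in R_3$ with $d_3(p)\in\{15,16\}$. The plan is to build, by a sequence of symmetries, a coloring $c'$ that satisfies every fact of some normalized configuration. Then $c$ is covered in the sense of Lemma~\ref{lem:soundness}. The second claim then follows by composing symmetries.

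\textbf{Step 1 (the root and its groups).} First apply a vertex permutation sending $r$, $p$ and the groups $R_0,\dots,R_3$ to the fixed vertex sets used for these sizes. This is possible because the sizes are fixed and all cases share the same vertex sets. Root edges are then fixed as in item~1.

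\textbf{Step 2 (templates on $R_1$, $R_2$ and the brace).} Use Theorem~\ref{thm:classification} with the palette convention of Section~\ref{sec:templates}, via Lemma~\ref{lem:templatesym}(c), to permute vertices inside $R_1$ and $R_2$ so that each carries its template with the standard palette. If $R_3$ is critical, do the same there; otherwise leave it untouched, since none of the later facts mention edges inside $R_3$. This is why no template on $R_3$ is needed. By Lemma~\ref{lem:marked-shapes}, the brace is $\{r\}\cup S_0\cup S_1\cup S_2$ with one of the four shapes, and each support carries complementary cycles or paths in the two colors other than $i$ and~$3$. So each $S_i$ admits a labeling by the reference coloring. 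For $i=1,2$, the set $S_i$ is a subset of the template on $R_i$ admitting the reference coloring. Choose an automorphism of the $R_i$ template, acting only on $R_i$, that carries $S_i$ to the chosen orbit representative. Then compose with a set-stabilizer element so that its labeling matches the fixed one. For $R_0$, which is free, any permutation of $R_0$ is allowed, so move $S_0$ onto the fixed vertices in reference order. These permutations act on disjoint sets and fix $r$ and $p$, so they preserve all facts established so far.

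\textbf{Step 3 (the word).} By classification, $S$ is a copy of $\U$, $\T$, $\U^-$ or $\T^-$. The root sits at a base vertex: position 15 by transitivity when $|S|=16$, or a position with the right color degrees when $|S|=15$. Its color-$i$ neighbors in $S$ are exactly $S_i$, because $c(rs)=i$ for $s\in R_i$. Read with the support labelings, the colors between the supports therefore form one of the 600 words of Computation~\ref{comp:words}. This fixes the brace's internal coloring, which is item~3. Its membership as $N_3(p)$ holds because all other vertices get negative color-$3$ facts to $p$. The remaining freedom, namely the automorphism group of the free reference path or cycle together with the set stabilizers of the foreign supports, acts on the words. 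Applying the corresponding symmetry moves the word to its orbit representative while keeping all earlier facts intact.

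The main obstacle is this last step. We must check that each residual symmetry really preserves the template facts on $R_1$ and $R_2$, which holds only because the stabilizers consist of template automorphisms. We must also check that the relabelings at the base vertex are compatible with the palette. Once this is in place, the retained list inherits coverage from the finite coverage condition: each configuration is obtained from some retained case by a symmetry, and the pullbacks compose.
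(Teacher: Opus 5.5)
Your proposal is correct and follows the paper's proof. It uses the same permutation $\sigma$, built from template automorphisms on $R_1$ and $R_2$, a permutation of $R_0$ placing $S_0$ in reference order, and the identity on $r$ and $R_3$, so that $p$ is fixed and the brace of $p$ in $c'$ is $\sigma(S)$; it then reads the word in the fixed labelings and applies the finite coverage condition. The differences are inessential: the paper leaves the orbit reduction of words to the finite coverage check, so your extra residual-symmetry step is valid but unneeded, and your set-stabilizer matching of foreign labelings in Step 2 is superfluous (and not always possible for a prescribed labeling), since the 600 words already range over all labelings.
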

\begin{proof}
By Lemma~\ref{lem:marked-shapes}, each support admits its reference coloring. Hence, for $i=1,2$, an automorphism of the whole template on $R_i$ carries $S_i$ to its chosen orbit representative. A reference labeling of $S_0$, sent in order to the fixed free-support vertices, extends to a permutation of $R_0$, as the complements have equal size. With the identity on $r$ and $R_3$, these maps form a permutation $\sigma$ of all 61 vertices. It maps each group of the root to itself and acts by automorphisms on $R_1$ and $R_2$, so it preserves the root-edge colors, the foreign templates and any template on $R_3$. Let $c'$ be the image of $c$ under the symmetry $(\sigma,\mathrm{id})$.

Since $\sigma$ fixes $p$, for every $v\ne p$ we have
\[
c'(p\sigma(v))=3\quad\Longleftrightarrow\quad c(pv)=3
\quad\Longleftrightarrow\quad v\in S,
\]
which implies that the brace of $p$ in $c'$ is $\sigma(S)$, the root with the selected supports. So $c'$ satisfies the required brace membership. The reference coloring on the free support holds, since the labeling of $S_0$ preserves colors. As shown after Computation~\ref{comp:marked-words}, the colors between the selected supports in $c'$, read in their fixed labelings, form one of the 600 words. With the selected supports, this word gives a normalized configuration whose facts all hold in $c'$.

Finally, each fact of the retained case pulls back to a fact of the configuration, which holds in $c'$. So the symmetry from the finite coverage condition carries $c'$ to a good coloring that satisfies the retained case.
\end{proof}

\subsection{The A, B and D cases}\label{sec:marked-abd}
Suppose a root violates its bound in Proposition~\ref{prop:abd}. We name the colors so that $R_0,\ldots,R_3$ have sizes $(12,16,16,16)$, $(13,15,16,16)$ or $(14,15,15,16)$. A global color permutation, followed by renormalizing the templates to the fixed palette, exchanges two groups of equal size. So we may order the template types by $\U<\T$ on all three 16-vertex groups for A, on $R_2,R_3$ for B and on $R_1,R_2$ for D. The root receives at least 48, 47 or 46 tickets, but only 44 of its neighbors lie outside a given 16-vertex group. Every 16-vertex group, including $R_3$, therefore contains at least four, three or two senders. We choose one in $R_3$ as the mark $p$, place the template of $R_3$ with $p$ at position 15 and apply Lemma~\ref{lem:marked-cover}.

For D, we also use the exchange of $R_1$ with $R_2$ and color 1 with color 2. We then restore the template of $R_3$ by a permutation of positions fixing position 15 and realizing the color exchange. Relabeling the reference path or cycle of the free support restores its coloring. When the two foreign types agree, the exchange identifies the shapes $(5,4,5)$ and $(5,5,4)$. For foreign types TT and shapes $(4,5,5)$ and $(5,5,5)$, we also order the orbits of the two five-vertex foreign supports. If the two orbits coincide, we make no further exchange reduction. Table~\ref{tab:marked-abd} lists the retained cases for Lemma~\ref{lem:marked-cover}.

\begin{table}[htbp]
\centering
\small
\caption{Retained A/B/D cases, with the foreign types of $R_1,R_2$. Counts are per listed marked type, so a row with marked types U,T counts twice. A dash means the exchange of the foreign groups covers shape $(5,5,4)$ through shape $(5,4,5)$.}\label{tab:marked-abd}\label{tab:ab}\label{tab:d}
\begin{tabular}{@{}lllrrrrr@{}}
\toprule
Profile & Foreign & Marked & $(4,5,5)$ & $(5,4,5)$ & $(5,5,4)$ & $(5,5,5)$ & Total\\
\midrule
A & UU & U,T & 4 & 4 & 4 & 2 & 14\\
A & UT & T & 16 & 16 & 20 & 4 & 56\\
A & TT & T & 76 & 92 & 92 & 16 & 276\\
B & UU & U,T & 36 & 40 & 36 & 10 & 122\\
B & UT & T & 168 & 184 & 204 & 36 & 592\\
B & TU & U,T & 168 & 220 & 168 & 36 & 592\\
B & TT & T & 828 & 1{,}084 & 996 & 168 & 3{,}076\\
D & UU & U,T & 189 & 408 & -- & 82 & 679\\
D & UT & U,T & 1{,}824 & 1{,}992 & 2{,}388 & 372 & 6{,}576\\
D & TT & U,T & 5{,}256 & 11{,}892 & -- & 1{,}056 & 18{,}204\\
\bottomrule
\end{tabular}
\end{table}

\begin{computation}\label{comp:marked-census}\label{comp:abdcover}
For every A, B and D family in Table~\ref{tab:marked-abd}, the retained list satisfies the finite coverage condition. The lists contain 360 A cases, 5,096 B cases and 50,918 D cases, 56,374 in total.
\end{computation}

Each case gives a formula as in Definition~\ref{def:case}. The case lists, symmetry witnesses and formula comparisons accompany the computations of Section~\ref{sec:verification}.

\begin{computation}[The A/B/D exclusions]\label{comp:abdsat}
All 56,374 A/B/D case formulas are unsatisfiable. Kissat~4.0.4~\cite{Kissat} and CaDiCaL~3.0.1~\cite{Cadical}, both with default options, refuted each of them.
\end{computation}

\begin{proof}[Proof of Proposition~\ref{prop:abd}]
A vertex violating one of the bounds gives a mark as above. Lemma~\ref{lem:marked-cover} and Computation~\ref{comp:marked-census} directly imply that, after a symmetry, the coloring satisfies a case of Table~\ref{tab:marked-abd}. By Lemma~\ref{lem:soundness}, its edge colors satisfy the formula of that case, which contradicts Computation~\ref{comp:abdsat}.
\end{proof}
\section{Excluding C entrances}\label{sec:entrance}

Let $(r,p)$ be an entrance (Definition~\ref{def:entrance}). We name the colors so that $p$ lies in $R_3$ and the groups $R_0,\ldots,R_3$ have sizes $(14,16,16,14)$. The ticket from $p$ to $r$ means $d_3(p)\in\{15,16\}$, so the construction of Section~\ref{sec:marked-construction} applies with templates on $R_1,R_2$ and none on $R_0,R_3$.

If necessary, we exchange colors 1 and 2, and with them $R_1$ and $R_2$, to order the foreign types as UU, UT or TT. The actions on supports and words involve only the two foreign templates and the free support. Every vertex $v$ of $R_0$ outside $S_0$ has $c(pv)\ne3$, so enlarging $R_0$ from 12 to 14 vertices leaves these actions unchanged. Thus, they are the actions of the A row of Table~\ref{tab:marked-abd} with the same foreign types. We keep all four shapes and list the cases in Table~\ref{tab:marked-c}.

\begin{table}[htbp]
\centering
\caption{Retained C entrance cases. A shape lists $|S_0|,|S_1|,|S_2|$, with $S_0$ the free support.}\label{tab:marked-c}\label{tab:c}
\begin{tabular}{@{}lrrrrr@{}}
\toprule
Foreign types & $(4,5,5)$ & $(5,4,5)$ & $(5,5,4)$ & $(5,5,5)$ & Total\\
\midrule
UU & 4 & 4 & 4 & 2 & 14\\
UT & 16 & 16 & 20 & 4 & 56\\
TT & 76 & 92 & 92 & 16 & 276\\
\midrule
Total & 96 & 112 & 116 & 22 & 346\\
\bottomrule
\end{tabular}
\end{table}

\begin{proposition}\label{prop:marked-c-cover}
Every good coloring with an entrance satisfies, after a symmetry, all facts of one of these 346 cases.
\end{proposition}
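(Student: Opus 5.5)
The plan is to reduce the statement to Lemma~\ref{lem:marked-cover}, applied with the sizes $(14,16,16,14)$, templates on $R_1$ and $R_2$, and no template on $R_0$ or $R_3$. That lemma explicitly needs no template on $R_3$. Accordingly, the work splits into three parts: normalizing the entrance into the setting of Section~\ref{sec:marked-construction}, reducing the foreign types to UU, UT or TT, and checking the finite coverage condition for the 346 retained cases.

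\textbf{Normalization.} Given an entrance $(r,p)$, first apply a global color permutation. It sends the 14-vertex group containing $p$ to color 3, the other 14-vertex group to color 0, and the two 16-vertex groups to colors 1 and 2. Next, renormalize the templates on $R_1,R_2$ to the fixed palette, using Theorem~\ref{thm:classification} and Lemma~\ref{lem:templatesym}(c). If the types come out as TU, exchange colors 1 and 2 together with $R_1,R_2$, renormalize again, and relabel the reference path or cycle of the free support so that its coloring is restored. This exchange uses only the two foreign templates and the free support, so no template on $R_3$ has to be restored. This is simpler than case D. Since $d_3(p)\in\{15,16\}$, $p$ is a mark with brace $S=N_3(p)$. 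Lemma~\ref{lem:marked-shapes} then gives the four shapes, and Computation~\ref{comp:marked-words} gives the 600 words per shape.

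\textbf{Coverage.} Lemma~\ref{lem:marked-cover} now says that the normalized configurations cover every such coloring. It therefore remains to verify the finite coverage condition for the retained list of Table~\ref{tab:marked-c}. The key observation, already noted in the text, is that the group acting on configurations consists of three parts: the automorphisms of the foreign templates, the automorphism group (of order 2 or 10) of the free reference path or cycle, and the foreign exchange when the types agree. None of these depends on $|R_0|$. This is because a vertex of $R_0\setminus S_0$ is not in the brace, so $c(pv)\ne3$ is the only brace fact about it, and this fact is preserved by any permutation of $R_0$ fixing $S_0$ setwise. Consequently, the orbit representatives and symmetry witnesses from the A rows transfer directly. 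Each A witness $(\sigma,\tau)$ is extended by the identity on the two extra vertices of $R_0$ and on the 14 vertices of $R_3$ beyond the A-case $R_3$. Each fact then pulls back to a fact of the C configuration. The counts match the A rows: $14$, $56$ and $276$, for 346 in total.

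\textbf{Expected obstacle.} The main difficulty is checking that the transfer from A to C is valid. In case A, $R_3$ carried a template with $p$ at position 15, and the A-row reductions might have used symmetries that move $R_3$ nontrivially or realize color exchanges on it. Those symmetries are unavailable here, and the case A action on the group of size 12 was restricted to $R_0$. I would first confirm that the A reductions for the shapes involved act as the identity on $R_3$. If some witness does not, I would fall back on directly recomputing the finite coverage condition for the C family. This recomputation is a finite check, and it is what the accompanying computations are meant to certify. Combining this check with Lemma~\ref{lem:marked-cover} then yields the statement.
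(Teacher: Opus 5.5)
Your proposal is correct and follows essentially the paper's route. You reduce to Lemma~\ref{lem:marked-cover} with no template on $R_0$ or $R_3$, order the foreign types by exchanging colors 1 and 2, note that the extra vertices of $R_0$ carry only the fact $c(pv)\ne3$, and reuse the A-row actions and representatives, which gives the counts 14, 56 and 276. The obstacle you anticipate does not arise: the actions on supports and words can be taken to be the identity on $r$ and $R_3$, as in the proof of Lemma~\ref{lem:marked-cover}, and the only C facts touching $R_3\setminus\{p\}$ are the root edges and $c(pv)\ne3$, which every permutation of $R_3$ fixing $p$ preserves.
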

\begin{proof}
The entrance gives a mark as in Lemma~\ref{lem:marked-cover}, which needs no template on $R_3$. We normalize the two foreign templates and order their types as above. The actions of the A rows fix the mark and preserve both foreign templates. With these actions, the cases of Table~\ref{tab:marked-c} satisfy the finite coverage condition for every shape and type pair, so Lemma~\ref{lem:marked-cover} applies.
\end{proof}

\begin{computation}[The C entrance exclusions]\label{comp:csat}
All 346 case formulas of Table~\ref{tab:marked-c} are unsatisfiable. Kissat and CaDiCaL, run as in Computation~\ref{comp:abdsat}, refuted each of them.
\end{computation}

\begin{proof}[Proof of Proposition~\ref{prop:noentrance}]
By Proposition~\ref{prop:marked-c-cover}, a good coloring with an entrance satisfies, after a symmetry, a case of Table~\ref{tab:marked-c}. By Lemma~\ref{lem:soundness}, its edge colors satisfy the formula of that case, contradicting Computation~\ref{comp:csat}.
\end{proof}
\section{Excluding perfect balance}\label{sec:balance}
To prove Proposition~\ref{prop:nobalance}, we first show that perfect balance gives every vertex degree 16 in the same two colors, and then exclude this degree vector.

\subsection{A common degree vector}\label{sec:allC}
Suppose $c$ is perfectly balanced. By Proposition~\ref{prop:abd} and Lemma~\ref{lem:entrance}(2), only profiles C and E occur, and a C vertex receives tickets exactly from its two 16-vertex neighborhoods.

\begin{lemma}[Reciprocity]\label{lem:twoway}
In a perfectly balanced coloring, an edge's color is critical at one endpoint if and only if it is critical at the other.
\end{lemma}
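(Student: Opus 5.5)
We read off reciprocity directly from the description of who sends tickets to whom in Lemma~\ref{lem:entrance}(2). Let $uv$ be an edge of color $i$. By Definition~\ref{def:ticket}, $u$ sends a ticket to $v$ exactly when $i$ is critical at $u$. So it suffices to show that $v$ receives a ticket from $u$ if and only if $i$ is critical at $v$. By symmetry of the statement in $u$ and $v$, this gives the equivalence.

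We use that only profiles C and E occur, and we determine the set of senders to $v$ exactly in each case. If $v$ has profile E, Lemma~\ref{lem:entrance}(2) says $v$ receives tickets from all 60 neighbors. Every color is critical at $v$, since all four degrees equal 15. Hence "$u$ sends $v$ a ticket" and "$c(uv)$ is critical at $v$" both hold for every $u$.

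If $v$ has profile C, its critical colors are exactly the two colors of degree 16. Lemma~\ref{lem:entrance}(2) says $v$ receives tickets from exactly its 32 neighbors in these colors. This means precisely that $u$ sends $v$ a ticket if and only if $c(uv)$ is critical at $v$.

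Combining the two cases: $i$ is critical at $u$ $\iff$ $u$ sends $v$ a ticket $\iff$ $i$ is critical at $v$.

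The only point needing care is the word "exactly" in the C case. The counting in the proof of Lemma~\ref{lem:entrance} gives $H(v)=32$. Without an entrance, the senders to $v$ lie in its two 16-vertex groups, which have 32 vertices in total. So every vertex of those groups sends, and no vertex outside them does; the latter holds because any sender in a 14-vertex group would form an entrance.
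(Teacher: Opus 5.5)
Your proof is correct and follows the paper's argument: you split on whether $v$ has profile C or E, and use that with no entrance every ticket to a C vertex arrives along one of its two degree-16 colors. The paper proves only the forward implication (a ticket from $u$ along color $i$ forces $d_i(v)\in\{15,16\}$) and then uses the symmetry in $u$ and $v$, so the break-even ``exactly'' step that you flag as delicate is not needed.
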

\begin{proof}
Suppose $u$ sends a ticket to $v$ along a color-$i$ edge. If $v$ is E, then $d_i(v)=15$. If $v$ is C, then $u$ lies in one of its 16-vertex neighborhoods, so $d_i(v)=16$. In both cases, $v$ sends a ticket back.
\end{proof}

\begin{lemma}[Internal connectivity]\label{lem:critical-connectivity}
Every color graph in a good three-coloring of $K_n$, for $n\in\{15,16\}$, is connected.
\end{lemma}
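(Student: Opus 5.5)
The plan is a pure degree count, with no computation. First I will bound the color degrees inside a good three-coloring of $K_n$, and then show that a disconnected color graph would need a component that is too large.

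\emph{Degree bounds.} Fix a good three-coloring of $K_n$ with $n\in\{15,16\}$, a vertex $v$ and a color $i$. The color-$i$ neighborhood of $v$ contains no edge of color $i$, since such an edge would close a color-$i$ triangle with $v$. So that neighborhood carries a good two-coloring, and by $R(3,3)=6$ it has at most five vertices. The three color degrees of $v$ add up to $n-1\ge14$ and each is at most $5$, so each is at least $n-1-10\ge4$. For $n=16$ the three degrees add up to 15, so each is exactly $5$; this already appears in the proof of Lemma~\ref{lem:marked-shapes}.

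\emph{Triangle-free components are large.} Suppose the color-$i$ graph $G_i$ is disconnected, and let $C$ be a component with $|C|\le n/2$. Every color-$i$ neighbor of a vertex of $C$ lies in $C$, so the minimum degree of $G_i[C]$ is at least $4$. Since $G_i$ is triangle-free, two adjacent vertices $u,w\in C$ have disjoint color-$i$ neighborhoods, and both lie in $C$. This gives $|C|\ge d_i(u)+d_i(w)\ge8$.

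\emph{Conclusion.} For $n=15$, the smaller component has at most $7$ vertices, which contradicts $|C|\ge8$. For $n=16$, the smaller component has at most $8$ vertices. But now every color degree equals $5$, so the same argument gives $|C|\ge5+5=10>8$, again a contradiction. Hence every color graph is connected.

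I expect no real obstacle. The only point needing care is that the disjoint neighborhoods of $u$ and $w$ both lie inside $C$. This holds because $C$ is a union of components of $G_i$, so every color-$i$ neighbor of a vertex of $C$ is again in $C$.
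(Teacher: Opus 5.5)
Your proof is correct and follows essentially the same route as the paper: color degrees at least $n-11$ from $R(3,3)=6$, then disjoint neighborhoods of an edge in a triangle-free component force at least $2(n-11)$ vertices per component, which two components cannot fit. The only cosmetic difference is that you bound the smaller component by $n/2$, where the paper notes that two components would need $4(n-11)>n$ vertices.
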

\begin{proof}
Every color degree is at most five, since a color neighborhood carries a good two-coloring and $R(3,3)=6$. The three degrees add up to $n-1$, so each is at least $n-11>0$. Fix a color graph. Since it has no triangle, every component contains an edge whose ends have disjoint neighborhoods. Hence, each component has at least $2(n-11)$ vertices, and two components would need $4(n-11)>n$ vertices, which is impossible.
\end{proof}

\begin{lemma}[Homogeneity of critical neighborhoods]\label{lem:critical-homogeneity}
In a perfectly balanced coloring, all vertices of a critical neighborhood have the same set of critical colors.
\end{lemma}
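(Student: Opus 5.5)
The plan is to track, for each of the four colors separately, whether that color is critical, and to show that this status is the same at every vertex of the critical neighborhood. Fix a vertex $v$ and a critical color $i$ at $v$, and put $S=N_i(v)$, so $\lvert S\rvert\in\{15,16\}$. Since $c$ is good, the edges inside $S$ avoid color $i$. Hence $S$ carries a good three-coloring in the three colors $j\ne i$.

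First, color $i$ itself. Every $u\in S$ is joined to $v$ by an edge of color $i$, and $i$ is critical at $v$. By Lemma~\ref{lem:twoway}, color $i$ is then critical at $u$. So color $i$ is critical at every vertex of $S$.

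Next, fix a color $j\ne i$. Consider the color-$j$ graph on the vertex set $S$, that is, the edges $uw$ with $u,w\in S$ and $c(uw)=j$. For any such edge, Lemma~\ref{lem:twoway} says that $j$ is critical at $u$ if and only if it is critical at $w$. So the property ``$j$ is critical'' is constant along the edges of this graph, and hence constant on each of its components. By Lemma~\ref{lem:critical-connectivity}, applied to the good three-coloring of $S$, this graph is connected and spans all of $S$. Therefore either $j$ is critical at every vertex of $S$, or at none.

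Combining the two steps over all four colors, every color has the same critical status at all vertices of $S$. Thus all vertices of $S$ have the same set of critical colors. I expect no serious obstacle here. The only point to check is that Lemma~\ref{lem:critical-connectivity} applies to each of the three color graphs inside $S$ and that these graphs use all of $S$ as vertex set, which holds because every color degree inside $S$ is positive. Note that the critical set at $v$ itself is not claimed to match that of $S$, and the statement does not require it.
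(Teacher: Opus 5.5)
Your proposal is correct and follows the paper's proof exactly. Reciprocity along the color-$i$ edges to $v$ makes $i$ critical throughout $N_i(v)$. For each $j\ne i$, the color-$j$ graph inside $N_i(v)$ is connected by Lemma~\ref{lem:critical-connectivity}, and reciprocity along its edges makes the critical status of $j$ the same at every vertex.
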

\begin{proof}
Let $N_i(v)$ be critical. By reciprocity (Lemma~\ref{lem:twoway}), $i$ is critical at every vertex of $N_i(v)$. For $j\ne i$, the color-$j$ graph inside $N_i(v)$ is connected by Lemma~\ref{lem:critical-connectivity}. Again by reciprocity, $j$ is critical at both ends of each such edge or at neither, hence at all of $N_i(v)$ or at none.
\end{proof}

\begin{lemma}[A common critical pair]\label{lem:allC}
In a perfectly balanced coloring, one global color permutation makes every vertex have $d_0=d_1=16$ and $d_2=d_3=14$.
\end{lemma}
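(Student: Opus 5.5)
We proceed in three steps. First we rule out profile E. Then we show that critical neighbors share the same critical pair. Finally a counting argument spreads this one pair to all vertices.

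\emph{Step 1: no E vertex.} Suppose some vertex is E. By Lemma~\ref{lem:entrance}(2), a C vertex $v$ receives tickets only from its 32 critical neighbors. So a C vertex and an E vertex are never joined in a non-critical color of the C vertex. Hence, if $v$ is C with critical colors $i,j$, every E neighbor of $v$ lies in $N_i(v)\cup N_j(v)$. By homogeneity (Lemma~\ref{lem:critical-homogeneity}), each critical neighborhood consists entirely of C vertices or entirely of E vertices. Take an E vertex $e$ and any color $k$, and suppose $N_k(e)$ contains a C vertex. Then all 15 vertices of $N_k(e)$ are C with the same critical set $\{i,j\}$, which contains $k$ by reciprocity. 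The vertex $e$ itself sends tickets to all 60 neighbors, so by Lemma~\ref{lem:twoway} every color at every neighbor of $e$ toward $e$ is critical. This does not yet contradict anything, so we use the roadmap's argument.

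If a C vertex $v$ has an E vertex $e$ in $N_i(v)$, then the whole $N_i(v)$ is E. Take a color $k\ne i$. Then $N_k(e)$ is a 15-vertex critical neighborhood of E vertices. We must check that it lies in $N_i(v)\cup N_j(v)$. Every E vertex adjacent to $v$ lies there, but vertices of $N_k(e)$ need not be adjacent to $v$ in a critical color unless they are E. They are E, because $N_k(e)$ is homogeneous and contains... We instead argue directly that $v\notin N_k(e)$ forces nothing, and turn to global homogeneity. The set of E vertices is closed under taking critical neighborhoods, and E vertices have all four colors critical, so for an E vertex every neighbor is in a critical neighborhood of it and is therefore E. So either all vertices are E, impossible since $61\cdot 15$ is odd, or none are. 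Thus every vertex is C.

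\emph{Step 2: critical neighbors have the same pair.} Let $v$ have critical pair $\{i,j\}$ and $u\in N_i(v)$. By homogeneity, all 16 vertices of $N_i(v)$ share $u$'s critical pair $\{i,k\}$, with $i$ included by reciprocity. Suppose $k\ne j$. Look at the color-$j$ edges inside $N_i(v)$; they exist since color degrees inside a template are 5. Such an edge $xy$ has color $j$, which is non-critical at $x$. But $y$ receives a ticket from $x$ only along critical colors, and $c(xy)=j$ is critical at $y$ if and only if it is critical at $x$ by Lemma~\ref{lem:twoway}. This is consistent, so we need a count instead. Consider $x\in N_i(v)$: its 16 neighbors $N_k(x)$ must avoid $N_i(v)$ in color $k$? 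No; color $k$ appears inside $N_i(v)$ (where $k\ne i$). We count the color-$j$ edges from $N_i(v)$ to $N_j(v)$: each $y\in N_j(v)$ has critical pair $\{j,\cdot\}$, and the edge $xy$ must have a color critical at both ends or neither. I expect this count, made via the degree constraints $d_k(x)=16$ distributed over $v$'s groups using the 5-per-color structure inside templates, to be the main obstacle. I would first try bounding $|N_k(x)\cap N_i(v)|\le 5$ and $|N_k(x)\cap R|\le 5$ for each other group $R$ of $v$ in which $k$ is the defining color, and derive $16\le$ something smaller.

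\emph{Step 3: propagation.} Once critical neighbors share pairs, any two vertices $u,w$ have $32+32>61$ critical neighbors in total, so they share one, and hence $u$ and $w$ have equal pairs. A global color permutation then makes this pair $\{0,1\}$.
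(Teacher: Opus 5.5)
Your Step 3 is correct and matches the paper's final step, but Steps 1 and 2 both have gaps.

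\textbf{Step 1.} Your decisive claim is that E vertices are closed under taking critical neighborhoods, so every neighbor of an E vertex is E. This does not follow from homogeneity. Lemma~\ref{lem:critical-homogeneity} says only that the vertices of $N_k(e)$ agree with one another. Since $e\notin N_k(e)$, nothing makes them agree with $e$, and you observed yourself that $N_k(e)$ might consist of C vertices. Indeed, if $e$ lies in $N_0(r)$ for a C vertex $r$, then $N_0(e)$ contains $r$.

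The argument you abandoned is the one that works, but it needs a specific choice of color. Fix a C vertex $r$, which exists by parity, with critical colors $0,1$. Reciprocity along the edge to $r$ puts every E vertex in $X\cup Y$, and if $e\in X$ is E, then all of $X$ is E. Now take $k=2$, a color noncritical at $r$, rather than an arbitrary $k\ne0$. The five color-2 neighbors of $e$ inside the template on $X$ are E, so $N_2(e)$ is entirely E and lies in $X\cup Y$. Its edges avoid colors $0,2$ inside $X$ and colors $1,2$ inside $Y$, so by $R(3,3)=6$ it has at most $5+5=10<15$ vertices. With $k=1$ instead, the part of $N_1(e)$ in $Y$ avoids only one color, and the count fails.

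\textbf{Step 2.} You leave the count open, and the bound you sketch cannot close it. Take $u\in N_i(v)$ with critical pair $\{i,k\}$ and $k\notin\{i,j\}$. The edges inside $N_k(u)\cap N_k(v)$ avoid only color $k$, so that part admits no bound of five.

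The missing idea is to show this part is empty, using reciprocity. Since $k$ is not critical at $v$, it is not critical at any vertex of $N_k(v)$, because their edges to $v$ have color $k$. Since $k$ is critical at $u$, no color-$k$ edge joins $u$ to $N_k(v)$. Also $c(uv)=i$. So $N_k(u)$ has five vertices in the template on $N_i(v)$, and at most five in each of $N_j(v)$ and $N_l(v)$, where $l$ is the fourth color, because there its edges avoid $k$ and the group color. Hence $d_k(u)\le15$, contradicting $d_k(u)=16$ at a C vertex, which Step 1 guarantees. Your detour through color-$j$ edges inside $N_i(v)$ is not needed.
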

\begin{proof}
Some vertex $r$ is C, since otherwise every color graph would be 15-regular on 61 vertices, with odd degree sum. We name its critical colors 0 and 1, and put $X=N_0(r)$ and $Y=N_1(r)$.

By reciprocity along its edge to $r$, every E vertex $e$ lies in $X\cup Y$, and by homogeneity its whole group, $X$ or $Y$, is E. Every color degree inside a template is five (see the proof of Lemma~\ref{lem:marked-shapes}), so $e$ has five color-2 neighbors in its group, all of them E. As $d_2(e)=15$, the neighborhood $N_2(e)$ is critical and contains these E vertices. By homogeneity, all of $N_2(e)$ is E and therefore lies in $X\cup Y$. Inside $X$, its edges avoid colors 0 and 2, and inside $Y$ colors 1 and 2, leaving at most five vertices in each part. This gives $15=|N_2(e)|\le10$, a contradiction, and every vertex is C.

Now take any root $r$, name its critical pair 0 and 1, and let $x\in X=N_0(r)$. By reciprocity, 0 is critical at $x$. Suppose the other critical color of $x$ is $j\in\{2,3\}$. Since $d_j(r)=14$, reciprocity makes $j$ noncritical at every vertex of $N_j(r)$. As $j$ is critical at $x$, no color-$j$ edge joins $x$ to $N_j(r)$, and $c(xr)=0$. The color-$j$ neighbors of $x$ are five vertices of $X$ and at most five in each of the two remaining groups, where their edges avoid color $j$ and the root color of that group. Hence, $d_j(x)\le5+5+5=15$, while a critical color of a C vertex has degree 16. So the pair at $x$ is $\{0,1\}$, and likewise on $Y$.

Since $r$ was arbitrary, every vertex has the same critical pair as each of its 32 critical neighbors. Any two vertices share a critical neighbor, as $32+32>61$, so all pairs agree, and one global color permutation names this pair 0 and 1.
\end{proof}

\subsection{Visible opposite neighborhoods}\label{sec:guard}
From now on, we assume only the degree vector of Lemma~\ref{lem:allC}. At a root $r$, we write
\[
X=N_0(r),\qquad Y=N_1(r),\qquad P=N_2(r),\qquad Q=N_3(r).
\]

\begin{lemma}[The guard]\label{lem:guard}
Every $x\in X$ has five color-0 neighbors in $Y$, and every $y\in Y$ has five color-1 neighbors in $X$.
\end{lemma}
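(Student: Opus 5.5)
The plan is a direct count of the color-0 neighborhood of $x$, split across the four groups of $r$. Since we now assume the degree vector of Lemma~\ref{lem:allC}, we have $d_0(x)=16$. I will show that each part of $N_0(x)$ outside $\{r\}$ has at most five vertices. Then $16=1+15$ forces every bound to be tight, and in particular exactly five color-0 neighbors lie in $Y$. The statement for $y\in Y$ follows by the same argument with colors 0 and 1 exchanged, or directly by the global color symmetry that swaps them.

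The steps are as follows. First, $c(xr)=0$, so $r\in N_0(x)$. Second, no vertex of $X$ lies in $N_0(x)$: an edge of color 0 inside $X=N_0(r)$ would form a color-0 triangle with $r$. Hence
\[
N_0(x)=\{r\}\cup\bigl(N_0(x)\cap Y\bigr)\cup\bigl(N_0(x)\cap P\bigr)\cup\bigl(N_0(x)\cap Q\bigr).
\]
Third, I bound each of the three intersections as in the proof of Lemma~\ref{lem:marked-shapes}.
\begin{itemize}
\item Edges inside $N_0(x)\cap Y$ avoid color 0, since they lie in $N_0(x)$, and color 1, since they lie in $Y=N_1(r)$.
\item Edges inside $N_0(x)\cap P$ avoid colors 0 and 2.
\item Edges inside $N_0(x)\cap Q$ avoid colors 0 and 3.
\end{itemize}
Each set therefore carries a good two-coloring, and $R(3,3)=6$ gives at most five vertices in each.

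Summing gives $16=d_0(x)\le1+5+5+5=16$, so equality holds throughout and $\lvert N_0(x)\cap Y\rvert=5$.

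I expect no real obstacle here. The only point to check is that the degree $d_0(x)=16$ is available for every $x$, not just for $r$. This is exactly what Lemma~\ref{lem:allC} provides: it gives the common degree vector at every vertex, and the roles of $X$ and $Y$ are symmetric under exchanging colors 0 and 1.
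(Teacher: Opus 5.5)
Your proposal is correct and follows essentially the same argument as the paper. The paper also splits the 16 color-0 neighbors of $x$ into $r$ plus at most five in each of $Y$, $P$ and $Q$ by $R(3,3)=6$, forces equality, and handles $y\in Y$ by exchanging colors 0 and 1.
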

\begin{proof}
The 16 color-0 neighbors of $x$ are $r$ and 15 vertices outside $X$. Inside each of $Y$, $P$ and $Q$, their edges avoid color 0 and the root color of the group, so each group contains at most five of the 15, hence exactly five. For $y$, we exchange colors 0 and 1.
\end{proof}

By Theorem~\ref{thm:classification} and Lemma~\ref{lem:templatesym}(c), we may relabel the vertices of $X$ and $Y$, without changing any edge color, so that they carry templates $V_X,V_Y\in\{\U,\T\}$ with palettes $(1,2,3)$ and $(0,2,3)$.

For $x\in X$ and $y\in Y$, we call
\[
\Omega_x=(X\cup Y)\cap N_1(x),\qquad
\Omega_y=(X\cup Y)\cap N_0(y)
\]
their \emph{visible opposite neighborhoods}. Since $N_1(x)$ and $N_0(y)$ have 16 vertices, each is a copy of $\U$ or $\T$. By restriction, the colored graphs on $\Omega_x$ and $\Omega_y$ embed, with all colors preserved, into $\U$ or $\T$ with palettes $(0,2,3)$ and $(1,2,3)$.

The templates on $X$ and $Y$ fix the parts $A_x=X\cap N_1(x)$ of $\Omega_x$ and $B_y=Y\cap N_0(y)$ of $\Omega_y$. Both parts have five vertices (see the proof of Lemma~\ref{lem:marked-shapes}). Their edges avoid colors 0 and 1, so they carry a color-2 pentagon and its color-3 complement. We call these five-sets the \emph{anchors} and the other visible vertices \emph{members}. A vertex $y\in Y$ is a member of $\Omega_x$ exactly when $c(xy)=1$, and $x\in X$ is a member of $\Omega_y$ exactly when $c(xy)=0$ (Figure~\ref{fig:opposite-grid}). A visible neighborhood has at most 11 members, since its critical neighborhood has 16 vertices.

\begin{figure}[!htbp]
\centering
\definecolor{braceblue}{HTML}{0072B2}
\definecolor{braceorange}{HTML}{E69F00}
\definecolor{bracegreen}{HTML}{009E73}
\begin{tikzpicture}[x=1cm,y=1cm,
  note/.style={font=\small,align=left,text width=6.3cm,inner sep=2pt},
  lab/.style={font=\small,inner sep=1pt},
  tick/.style={line width=1.2pt}]
  \def\s{.22}
  \newcommand{\panel}[2]{
    \begin{scope}[shift={(#1,0)}]
      \fill[black!7] (0,{10*\s}) rectangle ({16*\s},{11*\s});
      \fill[black!7] ({10*\s},0) rectangle ({11*\s},{16*\s});
      \draw[step=\s,draw=black!30,line width=.3pt] (0,0) grid ({16*\s},{16*\s});
      \draw[draw=black!70,line width=.6pt] (0,0) rectangle ({16*\s},{16*\s});
      \draw[draw=braceorange,line width=1.2pt,fill=braceorange!40] ({10*\s},{10*\s}) rectangle ({11*\s},{11*\s});
      \node[lab,anchor=east] at (-.1,{10.5*\s}) {$x_i$};
      \node[lab,anchor=south] at ({10.5*\s},{16*\s+.05}) {$y_j$};
      \node[font=\small,anchor=south] at ({8*\s},{16*\s+.5}) {#2};
    \end{scope}}
  \panel{0}{(a) $c(x_iy_j)=1$: $y_j\in\Omega_{x_i}$}
  \foreach \r in {2,4,7,9,12} {
    \draw[draw=braceblue,line width=1.2pt] ({10*\s},{(16-\r)*\s}) rectangle ({11*\s},{(17-\r)*\s});
  }
  \node[note,anchor=north] at ({8*\s},-.3)
    {The five boxed entries lie in column $j$, in the rows of the anchor $A_{x_i}$. Their colors, from $\{0,2,3\}$, are those of the position of $y_j$ in the table.};
  \def\ox{7.3}
  \panel{\ox}{(b) $c(x_iy_j)=0$: $x_i\in\Omega_{y_j}$}
  \foreach \c in {2,5,8,13,16} {
    \draw[draw=bracegreen,line width=1.2pt] ({\ox+(\c-1)*\s},{10*\s}) rectangle ({\ox+\c*\s},{11*\s});
  }
  \node[note,anchor=north] at ({\ox+8*\s},-.3)
    {The five boxed entries lie in row $i$, in the columns of the anchor $B_{y_j}$. Their colors, from $\{1,2,3\}$, are those of the position of $x_i$ in the table.};
\end{tikzpicture}
\caption{A cross edge $x_iy_j$ (orange) and the embedding conditions of $\Omega_{x_i}$ and $\Omega_{y_j}$. In (a), it has color 1, so $y_j$ is a member of $\Omega_{x_i}$ and takes a nonanchor position in a table of Section~\ref{sec:anchor-tables}. In (b), it has color 0, so $x_i$ is a member of $\Omega_{y_j}$. All 32 conditions share this grid of 256 edges. Each entry decides membership in the conditions of its row and column and is an anchor entry in up to ten others. Box colors are not edge colors.}
\label{fig:opposite-grid}
\end{figure}

\subsection{Finite template tables}\label{sec:anchor-tables}\label{sec:completions}
A \emph{table} is a relabeling of $\U$ or $\T$ onto positions $0,\ldots,15$ whose \emph{anchor positions} $0,\ldots,4$ carry the reference pentagon, with template color 1 on $01,12,23,34,40$ and color 2 on the other pairs. The other 11 positions are the \emph{nonanchor positions}. An \emph{ordered anchor} is a sequence of five vertices carrying this reference coloring.

In a template, each five-subset independent in color 0 carries complementary pentagons in colors 1 and 2. Its ten orders along the color-1 pentagon are ordered anchors, and every ordered anchor arises in this way, since it has no color-0 edge.

\begin{computation}[The anchored table library]\label{comp:charts}
Each of $\U$ and $\T$ has 16 five-subsets independent in color 0. Six representative tables, one from $\U$ and five from $\T$, cover the resulting $2\cdot16\cdot10=320$ ordered anchors. For every ordered anchor, there is a bijection of the 16 positions that preserves all 120 edge colors and sends the ordered anchor, in order, to the anchor positions of its representative.
\end{computation}
We order each $A_x$ and $B_y$ along its color-2 pentagon, which has template color 1 in both palettes $(0,2,3)$ and $(1,2,3)$, so the library covers all ten such orders.

\begin{proposition}[Exhaustive anchored tables]\label{prop:anchored}
Let $G$ be a three-colored graph with an ordered anchor colored as above. Then $G$ embeds in $\U$ or $\T$ if and only if we can assign its other vertices distinct nonanchor positions in one of the six tables, preserving the colors of all edges at these vertices.
\end{proposition}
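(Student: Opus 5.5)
We prove the two directions separately, relying on Computation~\ref{comp:charts}. Throughout, ``embeds'' means an injective map of the vertices of $G$ into the positions of $\U$ or $\T$ that preserves the color of every edge of $G$. The anchor of $G$ is an ordered sequence $(g_0,\ldots,g_4)$ carrying the reference pentagon: template color 1 on consecutive pairs and color 2 on the others.

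\emph{Sufficiency} is immediate. Suppose the nonanchor vertices of $G$ receive distinct nonanchor positions in one of the six tables, preserving all edge colors at these vertices. Extend this assignment by sending $g_k$ to anchor position $k$. The map is injective, because anchor and nonanchor positions are disjoint. Edges inside the anchor keep their colors, since both the anchor of $G$ and positions $0,\ldots,4$ carry the reference coloring. Every other edge has an endpoint outside the anchor, so its color is preserved by hypothesis. Each table is a relabeling of $\U$ or $\T$, so composing with that relabeling gives an embedding into $\U$ or $\T$.

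\emph{Necessity} is the substantive direction. Let $\phi$ be an embedding of $G$ into a template $V\in\{\U,\T\}$.

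First, the images $\phi(g_0),\ldots,\phi(g_4)$ form an ordered anchor of $V$. They are distinct, and their pairwise colors are those of the reference pentagon, so none of these pairs has color 0. Hence the five-set is independent in color 0, carries complementary pentagons in colors 1 and 2, and is traversed in one of its ten orders along the color-1 pentagon. It is therefore one of the $320$ ordered anchors counted in Computation~\ref{comp:charts}.

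Next, that computation supplies a color-preserving bijection $\beta$ of the 16 positions of $V$ onto one of the six representative tables. This $\beta$ sends $\phi(g_k)$ to anchor position $k$ for each $k$.

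The composition $\beta\circ\phi$ is then an embedding of $G$ into that table with $g_k\mapsto k$. Since $\beta\circ\phi$ is injective and already uses the anchor positions for the anchor, the remaining vertices of $G$ land on distinct nonanchor positions. All edge colors at these vertices are preserved, because both $\phi$ and $\beta$ preserve colors. This gives the required assignment.

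The only point requiring care is how ``cover'' in Computation~\ref{comp:charts} is used. The computation must produce, for each of the 320 ordered anchors, a bijection that respects the order of the anchor and not merely the anchor as a set. The argument also relies on color preservation being literal, with no palette permutation at this stage. The palettes were fixed beforehand by the conventions for $\Omega_x$ and $\Omega_y$, and by the remark that the color-2 pentagon of each anchor has template color 1 in both palettes. The rest of the argument is bookkeeping.
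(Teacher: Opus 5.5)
Your proof is correct and follows the paper's argument. For necessity, you compose the embedding with the order-respecting bijection of Computation~\ref{comp:charts} attached to the image ordered anchor; for sufficiency, you send the anchor to positions $0,\ldots,4$ and pull back through the table's relabeling of $\U$ or $\T$, which the paper phrases as composing with the inverse bijection. Your explicit checks on injectivity and on the anchor edges only spell out what the paper leaves implicit.
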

\begin{proof}
An embedding sends the ordered anchor to one of the 320 enumerated ones, and composing it with that anchor's bijection gives a table assignment. Conversely, a table assignment composed with the inverse bijection is a color-preserving embedding into $\U$ or $\T$.
\end{proof}

\subsection{The three compatibility formulas}\label{sec:overlap}
By Lemma~\ref{lem:guard}, $N_0(x)\cap Y$ has five vertices. Its edges avoid colors 0 and 1, so it spans no edge of template color 0 in $V_Y$ and is one of the 16 five-subsets of Computation~\ref{comp:charts}. The same holds for $N_1(y)\cap X$ in $V_X$.

For each pair $(V_X,V_Y)$, the \emph{compatibility formula} $\Psi_{V_X,V_Y}$ has the edge variables and goodness clauses of Definition~\ref{def:case}. Its unit clauses fix the 60 root edges and the 240 template edges in $X$ and $Y$. For each $x\in X$, auxiliary variables encode the guard and the table criterion of Proposition~\ref{prop:anchored} through three kinds of choices:
\begin{enumerate}
\item Sixteen guard selectors choose exactly one five-subset of $V_Y$ independent in color 0. The chosen subset must equal $\{y\in Y:c(xy)=0\}$.
\item Six table selectors choose exactly one of the six anchored tables.
\item For each $y\in Y$ and nonanchor position $t$, a position variable records whether $y$ occupies $t$. A vertex receives exactly one position when $c(xy)=1$ and none otherwise. Distinct members receive distinct positions. Every anchor-to-member color agrees with the selected table, read with palette $(0,2,3)$, and so does every member-to-member color, which $V_Y$ fixes.
\end{enumerate}
We impose the same conditions for every $y\in Y$, with $X,Y$ and colors 0,1 exchanged, and write them as exactly-one and implication clauses. Each visible neighborhood has its own auxiliary variables, but all share the edge variables. The CNF files of the reproduction package share the table-independent constraints and add implied clauses on cross-edge colors and triangles (see Section~\ref{sec:verification}).

Exchanging colors 0 and 1 everywhere, and with them $X$ and $Y$, carries $\T\U$ to $\U\T$ and preserves the degree vector and all embedding requirements. So we need only $\U\U$, $\U\T$ and $\T\T$. Each formula has $13{,}656$ variables and $524{,}462$ clauses.

\begin{lemma}[From a coloring to a compatibility formula]\label{lem:overlap-sound}
A good coloring with $d_0=d_1=16$ and $d_2=d_3=14$ everywhere gives a satisfying assignment to one of $\Psi_{\U,\U}$, $\Psi_{\U,\T}$ or $\Psi_{\T,\T}$.
\end{lemma}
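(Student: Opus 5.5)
We construct the satisfying assignment directly from the coloring, after normalizing it by a symmetry. Fix any root $r$ and set $X=N_0(r)$, $Y=N_1(r)$, $P=N_2(r)$, $Q=N_3(r)$. By Theorem~\ref{thm:classification} and Lemma~\ref{lem:templatesym}(c), $X$ and $Y$ carry good three-colorings isomorphic to $\U$ or $\T$ with palettes $(1,2,3)$ and $(0,2,3)$. We relabel them onto the fixed vertex names of the formula by a vertex permutation that maps each group to itself, and we also map $P$ and $Q$ onto their fixed vertex sets. If the types are $(\T,\U)$, we first apply the global color exchange $0\leftrightarrow1$. This swaps $X$ and $Y$ and preserves the degree vector, which is symmetric in colors $0,1$ and in colors $2,3$. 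After this symmetry, the new coloring $c'$ is again good with the same degree vector, and it satisfies the unit clauses for the 60 root edges and the 240 template edges of $\Psi_{V_X,V_Y}$ for some $(V_X,V_Y)\in\{\U\U,\U\T,\T\T\}$. We set each edge variable $x_{e,i}$ true exactly when $c'(e)=i$. The shared clauses~\eqref{eq:baseclauses} then hold by Lemma~\ref{lem:soundness}.

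Next we assign the auxiliary variables for each $x\in X$; the case $y\in Y$ is symmetric. For the guard selectors, Lemma~\ref{lem:guard} gives $|N_0(x)\cap Y|=5$. Its edges avoid colors $0$ and $1$, so the set is independent in template color $0$ of $V_Y$. By Computation~\ref{comp:charts}, it is therefore one of the 16 listed five-subsets, and we select that one. For the tables, $N_1(x)$ has 16 vertices and carries a good three-coloring, so it is a copy of $\U$ or $\T$, and its restriction $\Omega_x$ embeds color-preservingly into $\U$ or $\T$ with palette $(0,2,3)$. The anchor $A_x=X\cap N_1(x)$ has five vertices carrying a color-2 pentagon and its color-3 complement. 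We order it along the color-2 pentagon, which has template color $1$ in this palette, so it is an ordered anchor. Proposition~\ref{prop:anchored} then supplies one of the six tables and distinct nonanchor positions for the members $y\in Y$ with $c'(xy)=1$, preserving all anchor-to-member and member-to-member colors. We set the table selector and the position variables accordingly, assigning no position to vertices $y$ with $c'(xy)\neq1$.

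Every clause family is now satisfied by construction: exactly-one selectors, the guard set equality, membership matching $c'(xy)=1$, injectivity of positions, and color agreement. Any implied clauses added in the package hold in every good coloring, so they are satisfied too.

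The main obstacle is consistency of the anchor's identification with the table. The formula hard-codes the positions of $A_x$ inside the fixed template $V_X$, in a fixed order. The embedding from Proposition~\ref{prop:anchored} must send that exact ordered anchor to anchor positions $0,\ldots,4$, which requires that the library covers every one of the ten orders of every anchor. Computation~\ref{comp:charts} asserts precisely this. We also need to check that the order chosen in the formula encoding agrees with the one used when setting variables, and that both palettes read template color $1$ as the real color $2$.
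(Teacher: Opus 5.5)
Your proposal is correct and follows the paper's proof essentially step by step. You normalize the root partition and the templates on $X$ and $Y$, exchanging colors 0 and 1 in the $(\T,\U)$ case, and read the edge variables off the coloring. You select $N_0(x)\cap Y$ through the guard, and you take the table and the injective placement of $\Omega_x$ from the classification of $N_1(x)$ and Proposition~\ref{prop:anchored}. Since Computation~\ref{comp:charts} covers all ten orders of every anchor, the ordering concern you raise is already settled. The palette check is immediate, because both $(0,2,3)$ and $(1,2,3)$ send template color 1 to real color 2.

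One small precision: the implied clauses of the package hold by goodness together with the fixed root and template unit clauses, as the paper says, rather than in every good coloring. Your normalized coloring satisfies those unit clauses, so the conclusion stands.
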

\begin{proof}
We normalize the root partition and the templates on $X$ and $Y$, using the exchange above if necessary to obtain UU, UT or TT. We set $x_{e,i}$ true exactly when $e$ has color $i$, so goodness and the fixed unit clauses hold.

For $x\in X$, we select $N_0(x)\cap Y$, one of the 16 independent five-subsets. Classification of $N_1(x)$ and Proposition~\ref{prop:anchored} give a table and an injective placement of $\Omega_x$ that sends its ordered anchor to positions $0,\ldots,4$. We select that table and set a position variable true exactly when its vertex is a member placed at that position. This way, every member has one position and every nonmember none, and the placement preserves all anchor-to-member and member-to-member colors.

Similarly, we treat every $y\in Y$ with colors and groups exchanged. The auxiliary variables of different vertices are disjoint and every edge has one color, so all conditions hold at once. By goodness and the fixed templates, the implied cross-edge clauses also hold.
\end{proof}

\begin{computation}[Three compatibility exclusions]\label{comp:overlap}
The three formulas $\Psi_{\U,\U}$, $\Psi_{\U,\T}$ and $\Psi_{\T,\T}$ are unsatisfiable. Kissat and CaDiCaL, run as in Computation~\ref{comp:abdsat}, refuted each of them. The six runs took 179 CPU seconds in total.
\end{computation}

\begin{proposition}\label{prop:noallC}
No good four-coloring of $K_{61}$ has $d_0=d_1=16$ and $d_2=d_3=14$ at every vertex.
\end{proposition}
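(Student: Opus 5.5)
The argument is a direct combination of Lemma~\ref{lem:overlap-sound} with Computation~\ref{comp:overlap}, with the soundness of the encoding supplied by Lemma~\ref{lem:soundness} and Proposition~\ref{prop:anchored}. We argue by contradiction: suppose $c$ is a good four-coloring of $K_{61}$ with $d_0=d_1=16$ and $d_2=d_3=14$ at every vertex. We then carry out the following steps.

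\begin{enumerate}
\item Fix any root $r$ and set $X=N_0(r)$, $Y=N_1(r)$, $P=N_2(r)$ and $Q=N_3(r)$. These sets have sizes $16,16,14,14$, matching the root partition fixed in the unit clauses of the compatibility formulas.
\item Normalize $X$ and $Y$. By Theorem~\ref{thm:classification} and Lemma~\ref{lem:templatesym}(c), relabel the vertices of $X$ and of $Y$ so that they carry templates $V_X,V_Y\in\{\U,\T\}$ with palettes $(1,2,3)$ and $(0,2,3)$. If the types are $\T\U$, apply the global exchange of colors $0$ and $1$, which swaps $X$ and $Y$. This exchange preserves the degree vector, so we may assume the pair is $\U\U$, $\U\T$ or $\T\T$.
\item Build the assignment. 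Lemma~\ref{lem:overlap-sound} turns $c$ into a satisfying assignment of the corresponding $\Psi_{V_X,V_Y}$. Its ingredients are as follows.
\begin{itemize}
\item The guard (Lemma~\ref{lem:guard}) makes $N_0(x)\cap Y$ one of the 16 color-0-independent five-subsets of $V_Y$, which fixes the guard selectors.
\item The critical neighborhood $N_1(x)$ has 16 vertices, so by Theorem~\ref{thm:classification} it is a copy of $\U$ or $\T$. Proposition~\ref{prop:anchored} then gives a table and an injective placement of the members of $\Omega_x$ compatible with the ordered anchor $A_x$. This fixes the table selectors and position variables.
\item The symmetric statements hold for each $y\in Y$.
\end{itemize}
\item Conclude. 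The three formulas are unsatisfiable by Computation~\ref{comp:overlap}, which contradicts the existence of $c$.
\end{enumerate}

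There is no real mathematical obstacle; the proof is a short assembly. The one point requiring care is consistency between the normalizations. The relabelings of $X$ and $Y$ in step~2 must leave all edge colors unchanged, since they are relabelings of vertices, not recolorings. Also, the color exchange must act simultaneously on the templates, on the palettes and on every embedding requirement. Only then is the normalized coloring exactly of the form encoded in Lemma~\ref{lem:overlap-sound}.

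The implied cross-edge clauses added to the CNF files are consequences of goodness and the fixed templates. They therefore cannot exclude a genuine coloring, so the computational refutation applies to it.
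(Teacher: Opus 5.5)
Your proposal is correct and follows the paper's proof, which is the one-line combination of Lemma~\ref{lem:overlap-sound} with Computation~\ref{comp:overlap}. Your steps 1--3 unpack the normalization and the assignment of the guard selectors, table selectors and position variables that the proof of Lemma~\ref{lem:overlap-sound} already carries out.
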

\begin{proof}
By Lemma~\ref{lem:overlap-sound}, such a coloring would give a satisfying assignment to a formula excluded by Computation~\ref{comp:overlap}.
\end{proof}
\begin{proof}[Proof of Proposition~\ref{prop:nobalance}]
By Lemma~\ref{lem:allC}, a perfectly balanced coloring has, after a color permutation, the degree vector excluded by Proposition~\ref{prop:noallC}.
\end{proof}
\section{Computations and verification}\label{sec:verification}
We have reduced Theorem~\ref{thm:main} to the unsatisfiability of 56,720 distinct marked-case formulas and three compatibility formulas, 56,723 in all. One generator writes them on the same edge variables of $K_{61}$. We ran Kissat~\cite{Kissat} and CaDiCaL~\cite{Cadical} on each with no time limit, and every run returned UNSAT.

For our computation, we used an Apple M4 Max with 128 GiB of memory and ran up to 12 solver processes at once. The 113,446 runs took approximately 677 CPU hours, where about 92\% of the time was spent on the D cases (Table~\ref{tab:solver-costs}). These times exclude input generation, finite enumeration and Lean compilation.

\begin{table}[htbp]
\centering
\small
\caption{SAT runs, in CPU hours summed over processes. In a name such as D/TTU, the letters are the template types of $R_1,R_2,R_3$.}\label{tab:solver-costs}
\begin{tabular}{@{}lrrr@{}}
\toprule
Case family & Formulas & Kissat & CaDiCaL\\
\midrule
A & 360 & 0.31 & 1.71\\
B & 5{,}096 & 7.45 & 41.21\\
D/UUU & 679 & 1.92 & 3.25\\
D/UUT & 679 & 2.01 & 6.47\\
D/UTU & 6{,}576 & 20.04 & 40.03\\
D/UTT & 6{,}576 & 20.67 & 45.89\\
D/TTU & 18{,}204 & 65.23 & 168.26\\
D/TTT & 18{,}204 & 66.15 & 184.43\\
C entrance & 346 & 0.26 & 1.42\\
Perfect balance & 3 & 0.02 & 0.03\\
\midrule
Total & 56{,}723 & 184.06 & 492.71\\
\bottomrule
\end{tabular}
\end{table}

To verify the computation, we used two methods. First, separate programs matched the solver inputs one to one with the formulas emitted by Lean. They compared the ordered signed facts, the numbers of variables, clauses and bytes, and the hash of the full DIMACS file. For the marked cases, the counts and hashes were derived from the unit clauses and the common base~\eqref{eq:baseclauses}, which Lean emitted once. The compatibility formulas of Section~\ref{sec:overlap} were emitted in full. Second, we checked the input file, solver binary, command, exit status and UNSAT output of every run.

The final theorem of our Lean~4 formalization~\cite{Lean,mathlib} is \texttt{UniformK61.WholeProof.ramsey61}. Its single premise is that these 56,723 formulas are unsatisfiable. Otherwise, it uses only the standard axioms \texttt{propext}, \texttt{Classical.choice} and \texttt{Quot.sound}. Lean checks the classification certificates, but not the SAT exclusions.

We formalize the degree profiles, the double counting of tickets, the template classification with its 55 CNF/LRAT certificate pairs and the marked-neighborhood covers, where the C cover assumes no template on a 14-vertex group. We also formalize the perfect-balance argument, the anchored tables and the passage from a coloring to a satisfying assignment in Lemmas~\ref{lem:soundness} and~\ref{lem:overlap-sound}, where the second assigns all auxiliary variables at once. With exhaustive finite checks and our proofs about symmetries and pullbacks, we establish the template automorphisms and palette maps, the 600 words of each shape and the orbits of supports and words. We also check the 320 bijections of Computation~\ref{comp:charts} and that every retained case list satisfies the finite coverage condition.
\section{Data and AI assistance}\label{sec:data}
The reproduction package~\cite{repository} contains the Lean sources and manifests, the template data, finite tables, case lists and symmetry witnesses, the formula generator and the verification programs. Its executable instructions rebuild the Lean sources, regenerate and compare the inputs and rerun both solvers. It also records a completed build, every comparison and measurements for the verification steps of Section~\ref{sec:verification}. For every solver run, it keeps the command, resource use and full output.

AI tools assisted extensively with the mathematics, counterexample analysis, implementation, experiments and formalization. GPT~5.6 in Codex (ultra mode) and Fable~5 in Claude Code (ultracode mode) contributed to the A/B/D reductions. GPT-6 Pro provided mathematical proposals and analysis and helped prepare and edit the manuscript. GPT-6 Astra in Codex (ultra mode) contributed the C reduction, its integration and the simplified compatibility argument. The author directed the research and computations and takes responsibility for the result, its exposition, computational artifacts, verification claims and any necessary corrections.

\end{document}